\documentclass{elsart-preprint}
\makeatletter
\def\ps@pprintTitle{%
 \let\@oddhead\@empty
 \let\@evenhead\@empty
 \def\@oddfoot{\reset@font\hfil\thepage\hfil}
 \let\@evenfoot\@oddfoot
}
\makeatother

\usepackage{amsmath,amsfonts,amssymb,wasysym}
\usepackage[latin1]{inputenc}

\newcommand{\R}{{\mathbb R}}
\newcommand{\N}{{\mathbb N}}

\newcommand{\be}[1]{\begin{equation}\label{#1}}
\newcommand{\ee}{\end{equation}}
\renewcommand{\(}{\left(}
\renewcommand{\)}{\right)}
\newcommand{\ix}[2]{\int_{\R^{#1}}{#2}\;dx}
\newcommand{\idmu}[2]{\int_{\R^{#1}}{#2}\;d\mu}
\newcommand{\idmua}[2]{\int_{\R^d}{#2}\;d\mu_{#1}}
\newcommand{\iom}[1]{\int_{\Omega}{#1}\;dx}
\newcommand{\iomK}[1]{\int_{\Omega^{\mathrm K}}{#1}\;dx}
\newcommand{\nrmom}[2]{\|#1\|_{L^{#2}(\Omega)}}

\newcommand{\prf}{\par\smallskip\noindent{\sl Proof. \/}}
\newcommand{\finprf}{\unskip\null\hfill$\;\square$\vskip 0.3cm}
\newenvironment{proof}{\prf}{\finprf}
\renewcommand{\H}{\mathsf H}
\newcommand{\G}{\mathsf G}
\newcommand{\HH}{\gamma}
\newcommand{\gammaH}{H}

\newtheorem{theorem}{Theorem}
\newtheorem{corollary}[theorem]{Corollary}
\newtheorem{proposition}[theorem]{Proposition}
\newtheorem{remark}[theorem]{Remark}
\usepackage{color}
 
 

\begin{document}
\begin{frontmatter}
\title{Improved Poincar\'e inequalities}

\author[Dolbeault]{Jean Dolbeault}
\ead{dolbeaul@ceremade.dauphine.fr}
\ead[url]{www.ceremade.dauphine.fr/$\sim$dolbeaul}
\and
\author[Volzone]{Bruno Volzone}
\ead{bruno.volzone@uniparthenope.it}

\address[Dolbeault]{Ceremade (UMR CNRS no. 7534), Universit\'e Paris-Dauphine, Place de Lattre de Tassigny, 75775 Paris Cedex~16, France.}

\address [Volzone]{Universit\`a degli Studi di Napoli ``Parthenope'', Facolt\`a di Ingegneria, Dipartimento per le Tecnologie, Centro Direzionale Isola C/4 80143 Napoli, Italy.}

\begin{abstract}
\small Although the Hardy inequality corresponding to one quadratic singularity, with optimal constant, does not admit any extremal function, it is well known that such a potential can be improved, in the sense that a positive term can be added to the quadratic singularity without violating the inequality, and even a whole asymptotic expansion can be build, with optimal constants for each term. This phenomenon has not been much studied for other inequalities. Our purpose is to prove that it also holds for the gaussian Poincar\'e inequality. The method is based on a recursion formula, which allows to identify the optimal constants in the asymptotic expansion, order by order. We also apply the same strategy to a family of Hardy-Poincar\'e inequalities which interpolate between Hardy and gaussian Poincar\'e inequalities.
\end{abstract}

\begin{keyword}
Hardy inequality \sep Poincar\'e inequality \sep Best constant \sep Remainder terms \sep Weighted norms

\noindent{\sl MSC (2010):\/} 26D10 \sep 35P15 \sep 39B22 \sep 39B62 \sep 46E35
\end{keyword}
\end{frontmatter}

\section{Introduction}\label{Sec:Intro}

A considerable effort has been devoted to get improvements of Hardy inequalities. On $H^1_0(\Omega)$, we define the \emph{Hardy functional} by
\[
\H[u]:=\iom{|\nabla u|^2}-\frac 14\,(d-2)^2\iom{\frac{|u|^2}{|x|^2}}
\]
where $\Omega=\R^d$ or $\Omega$ is a bounded domain in $\R^d$ containing the origin, and $d\ge 3$. The standard \emph{Hardy inequality} asserts that
\be{Ineq:Hardy}
\H[u]\ge 0\quad\forall\;u\in H^1_0(\Omega)\;.
\ee
For an extension of \eqref{Ineq:Hardy} to a finite number of singularities, see \cite{MR2389919}. Inequality~\eqref{Ineq:Hardy} can be improved in various directions and we can list three lines of thought:
\begin{enumerate}
\item Prove that $\H[u]$ controls $\nrmom uq$ for some $q\in[2,2^*)$ with $2^*:=\!\!2\,d/(d-2)$, or $\nrmom{\nabla u}q$ for some $q\in[1,2)$. See \cite{alvino2009hardy} for a recent result in this direction, and \cite{MR1605678,MR1760280,MR817985,MR1938711,MR1742864} for earlier contributions.
\item Improve on the $\iom{\frac{|u|^2}{|x|^2}}$ term by showing that, with respect to the $1/|x|^2$ weight, not only $|u|^2$ is controlled, but also $|u|^2\log|u|^2$. See \cite{0906,DE2010,DETT2010} for recent papers in this direction.
\item Improve on the $1/|x|^2$ weight: see \cite{MR1918494,MR2462588,MR1970026,MR2048514,MR2091354,ghoussoub2007bessel,MR2443723,MR2509370}.
\end{enumerate}
A simple and well known method to establish \eqref{Ineq:Hardy} is based on an \emph{expansion of the square} which goes as follows. Let $u$ be a smooth function with compact support in $\Omega$ and observe that
\begin{align*}
0&\le\iom{|\nabla u+\tfrac{d-2}2\,\tfrac x{|x|^2}\,u|^2}\\
&=\iom{|\nabla u|^2}+\tfrac{(d-2)^2}4\iom{\frac{|u|^2}{|x|^2}}-\tfrac{d-2}2\iom{|u|^2\,\Big(\nabla\cdot\tfrac x{|x|^2}\Big)}=\H[u]
\end{align*}
where we have used an integration by parts and noted that $\nabla\cdot\frac x{|x|^2}=\tfrac{d-2}{|x|^2}$.

\medskip The Poincar\'e inequality with gaussian weight, or \emph{gaussian Poincar\'e inequality}, reads
\be{Ineq:Poincare}
\idmu d{|u-\bar u|^2}\le\idmu d{|\nabla u|^2}\quad\forall\;u\in H^1(\R^d,d\mu)
\ee
with $d\mu(x):=\mu(x)\,dx$, $\mu(x):=(2\pi)^{-d/2}\,e^{-|x|^2/2}$ and $\bar u:=\idmu du$. Our purpose is to study improvements of \eqref{Ineq:Poincare} in the spirit of what has been done for~\eqref{Ineq:Hardy}. Let us list some known results for \eqref{Ineq:Poincare}:
\begin{enumerate}
\item Spectral improvements are easily achieved under appropriate orthogonality conditions. See \cite{0904} for results and further references in this direction.
\item Replacing $|u|^2$ by $|u|^2\log|u|^2$ amounts to consider the logarithmic Sobolev inequality instead of the Poincar\'e inequality; see \cite{MR0420249} for an historical reference. There is a huge literature on this subject, which is out of the scope of the present paper.
\item A very standard argument based on the \emph{expansion of the square} has been repeatedly used in the literature. Let us give some details, in the gaussian case, as it is the starting point of our strategy.
\end{enumerate}

By expanding $\ix d{|\nabla(u\,e^{-|x|^2/4})|^2}$, we find that
\begin{equation}
\G[u]:=\idmu d{|\nabla u|^2}+\frac d2\idmu d{|u|^2}-\frac 14\idmu d{|x|^2\,|u|^2}\ge 0\;\label{Gaussian}.
\end{equation}
If $\bar u=0$, the middle term in $\G[u]$ can be estimated by \eqref{Ineq:Poincare}, thus showing that the following \emph{improved Poincar\'e inequality} holds:
\be{Ineq:Mouhot}
\idmu d{|x|^2\,|u|^2}\le 2\,(d+2)\idmu d{|\nabla u|^2}
\ee
(this inequality is an improvement in the sense that, as $|x|\to\infty$, the $|x|^2$ weight diverges). A slightly more general case has been considered for instance in \cite{Dolbeault2009511,DMS2010} (also see, \emph{e.g.}, \cite{mouhot2009fractional}). The expansion of the square method raises the following question. By \eqref{Gaussian} we know that \hbox{$\G[u]\ge 0$} for any $u\in H^1(\R^d,d\mu)$. With no additional assumption on $u$, \emph{is there a nonnegative function $W$ such that
\[
\G[u]\ge\idmu d{W\,|u|^2}
\]
for any $u\in H^1(\R^d,d\mu)$ and, if yes, can we give an asymptotic expansion as $|x|\to\infty$ of the best possible function $W\!$, order by order ?}

The purpose of this paper is to systematically investigate such improvements for gaussian Poincar\'e inequalities, following the same scheme as for the Hardy inequality. More precisely, using an elaborate \emph{expansion of the square} method, we derive an asymptotic expansion of the largest possible nonnegative function~$W$ and, order by order, find the best possible constants for any finite truncation of the asymptotic expansion.

To clarify our purpose, we will first recall in Section~\ref{Sec:Hardy} what can been done for the Hardy inequality and give a short proof of it based on the method used in \cite{MR2091354}. Then we shall adapt it to the gaussian Poincar\'e inequality, which provides us with our first main result: see Theorem~\ref{Thm:ImprovedPoincare} in Section~\ref{Sec:Poincaregaussian}. A striking parallel appears, which will be briefly investigated in Section~\ref{Sec:PoincareOther}, in the case of a family of inequalities interpolating between Hardy and Poincar\'e inequalities.

\medskip Before going further, let us quote a few additional references. Improvements of the Hardy inequality already have a quite long history. In \cite{MR1605678}, Brezis and V\'azquez have shown that in the case of a bounded domain $\Omega$, there exists a constant $\lambda_\Omega>0$ such that
\[
\lambda_\Omega\iom{|u|^2}\le\H[u]\quad\forall\;u\in H^1_0(\Omega)\;.
\]
The striking result of \cite{MR1862130}, by Adimurthi, Chaudhuri and Ramaswamy, is that a whole expansion in terms of iterated logarithms can be done close to the singularity (see also \cite{MR2184082} for a generalization in $W^{1,p}(\Omega)$). Filippas and Tertikas gave in \cite{MR1918494} the expression of the best constants for all terms of the expansion; also see \cite{MR2509370} for a more recent result, concerning $|u|^{p}$ in the remaining term, for the limit case $p=2^*$. In view of the generalization to relativistic models, Dolbeault, Esteban, Loss and Vega gave in \cite{MR2091354} an algebraic property which simplifies the computation of the expansion, while Ghoussoub and Moradifam in \cite{MR2443723} have established a rather simple characterization of the best constants. Some of the results of \cite{MR1918494} are summarized in Theorem~\ref{Thm:Filippas-Tertikas} below, with a simplified proof inspired by the combination of all above mentioned works. This proof will be a source of inspiration for the results on the gaussian Poincar\'e inequality, which are entirely new, and also for the Hardy-Poincar\'e inequality of Section~\ref{Sec:PoincareOther}.

\section{A key example: the improved Hardy inequality}\label{Sec:Hardy}

Let $r=|x|$ for any $x\in\Omega$, and set
\[
X_1(r) :=\(a-\log r\)^{-1}
\]
for some $a\ge1$ and
\[
X_k:=X_1\circ X_{k-1}
\]
for all $k\geq2$. We also define
\[
W_k:=\frac 14\,\prod_{j=1}^kX_j^2\quad\forall\;k\ge 1\;.
\]
We shall always assume that $0\in\Omega$. With $\delta_\Omega=\max_{\partial\Omega}|x|$, we choose $a=a_\Omega$ such that $a\ge1$ is the unique solution of $\delta_\Omega=1/(a-\log\delta_\Omega)$, \emph{i.e.} $a=\log\delta_\Omega+1/\delta_\Omega$, so that the interval $(0,\delta_\Omega]$ is stable under the action of $X_1$.

\begin{theorem}\label{Thm:Filippas-Tertikas} Let $\Omega$ be a bounded domain containing the origin and assume that $a=a_\Omega$. With $W=\sum_{j=1}^\infty W_j$, we have
\be{Ineq:HardyImproved}
\iom{W\,\frac{|u|^2}{|x|^2}}\le\H[u]\quad\forall\;u\in H^1_0(\Omega)\;.
\ee
Moreover, such a function $W$ is optimal in the following sense. Assume that \eqref{Ineq:HardyImproved} holds for some nonnegative, bounded, radial function $W\!$. Then we have:
\begin{itemize}
\item[(i)] if $W$ converges as $r\to 0_+$ to some limit $\ell\in[0,+\infty]$ then $\ell=0$,
\item[(ii)] if $\lim_{r\to 0_+}W=0$ and if $\frac W{W_1}$ converges as $r\to 0_+$ to some limit $\ell_1\in[0,+\infty]$ then $\ell_1\le1$,
\item[(iii)] for any $N\ge2$ and with the convention $W_0:=0$, if
\[
\lim_{r\to 0_+}W=0\;,\quad\lim_{r\to 0_+}\frac{W-\sum_{j=0}^{k-1}W_j}{\sum_{j=1}^k W_j}=1\quad\forall\;k\in\{1,\,2\ldots N-1\}
\]
and if $\frac{W-\sum_{j=1}^{N-1}W_j}{\sum_{j=1}^{N}W_j}$ converges as $r\to 0_+$ to some limit $\ell_{N}\in[0,+\infty]$, then $\ell_{N}\le1$.

\end{itemize}
\end{theorem}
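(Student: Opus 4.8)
The plan is to prove \eqref{Ineq:HardyImproved} and its optimality separately, in both cases after reducing to a one-dimensional problem on a half-line. For the inequality I would run the \emph{expansion of the square} exactly as in the elementary proof of \eqref{Ineq:Hardy}, but with a sharper radial field $\beta(r)\,x/|x|$. First, decomposing $u$ into spherical harmonics and extending by $0$ to the ball $B_{\delta_\Omega}\supseteq\Omega$, the angular contributions only add nonnegative terms, so it suffices to treat radial $u$; writing $u=r^{-(d-2)/2}v$ one finds, up to the factor $\omega_{d-1}$, that $\H[u]=\int_0^{\delta_\Omega}r\,|v'|^2\,dr$ and $\iom{W\,|u|^2/|x|^2}=\int_0^{\delta_\Omega}W\,v^2\,r^{-1}\,dr$. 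The point of the square expansion is the algebraic identity
\[
\H[u]-\iom{W\,\frac{|u|^2}{|x|^2}}=\iom{\Big|\nabla u+\beta\,\tfrac{x}{|x|}\,u\Big|^2}\ge0,
\]
which holds as soon as $\beta=\frac{d-2}{2r}+\gamma$ with $\gamma$ solving the Riccati-type identity $\gamma'+\frac1r\,\gamma-\gamma^2=\frac{W}{r^2}$.

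To verify the ansatz I would use the key computational fact $X_k'=\frac{X_k}{r}\prod_{j=1}^kX_j$, proved by induction from $rX_1'=X_1^2$ and $X_k'=X_k^2\,X_{k-1}'/X_{k-1}$. Setting $Y_k=\prod_{j=1}^kX_j$ and truncating $W$ at order $N$, I would take $\gamma=\frac1{2r}\sum_{k=1}^NY_k$; then $Y_k'=\frac{Y_k}{r}\sum_{j=1}^kY_j$, and a direct expansion gives $\gamma'+\frac1r\gamma-\gamma^2=\frac1{4r^2}\sum_{k=1}^NY_k^2=r^{-2}\sum_{k=1}^NW_k$, the off-diagonal $Y_jY_k$ cancelling identically. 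This proves \eqref{Ineq:HardyImproved} for every finite truncation, and monotone convergence then yields the full series. This is the easy half; the only care needed is the vanishing of boundary terms at the origin and on $\partial\Omega$, handled by density of functions supported in $B_{\delta_\Omega}\setminus\{0\}$.

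For optimality I would change variables to $\tau=a-\log r\in[1/\delta_\Omega,\infty)$ together with $u=r^{-(d-2)/2}v$: for radial $u$ supported near the origin the inequality becomes $\int\dot v^2\,d\tau\ge\int\widetilde W\,v^2\,d\tau$ on a half-line, where $X_1=1/\tau$, $\widetilde W_1=\frac1{4\tau^2}$, and the higher $\widetilde W_k$ carry the iterated-logarithm structure produced by the same recursion. I would then establish two base facts on the half-line: (a) there is no inequality $\int\dot v^2\ge c\int v^2$ with $c>0$ (test with slowly varying bumps at large $\tau$), and (b) the constant in $\int\dot v^2\ge c\int v^2/\tau^2$ is exactly $\frac14$ and not attained (ground-state substitution $v=\tau^{1/2}w$, with near-optimizers $v\approx\tau^{1/2}$ cut off logarithmically). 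Since the hypotheses constrain $W$ only as $r\to0$, i.e. $\tau\to\infty$, I would place the test functions in that regime, where $\widetilde W\ge\ell-\varepsilon$ (resp. $\widetilde W\ge(\ell_1-\varepsilon)\,\widetilde W_1$); fact (a) then forces $\ell=0$, giving (i), and fact (b) forces $\frac{\ell_1-\varepsilon}{4}\le\frac14$, giving $\ell_1\le1$ in (ii).

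For the general order $N$ in (iii) I would iterate the ground-state substitution: writing $v=\tau^{1/2}w$ and setting $\sigma=\log\tau$, the form $\int\dot v^2-\frac14\int v^2/\tau^2$ becomes $\int w_\sigma^2\,d\sigma$, while each weight $\widetilde W_k$ is carried to a weight of the same shape in the new variable $a+\sigma$ on a new half-line — precisely the recursion announced in the introduction, reducing order $N$ to the base case of order $1$. An induction on $N$, using the matching hypotheses to propagate that the first $N-1$ coefficients equal $1$ through each substitution, then forces $\ell_N\le1$ by the non-attainment in (b). I expect this last step to be the main obstacle: one must check that every ground-state substitution is legitimate (integrability and vanishing of boundary terms for the transformed function) and, above all, that the asymptotic normalization on $W$ transforms correctly under each change of variables so that the ``coefficient equal to $1$'' condition is preserved at every level. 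Tracking the iterated logarithms and the admissible class of test functions through $N-1$ successive reductions is where the bookkeeping is genuinely delicate.
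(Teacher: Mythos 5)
Your proposal is correct and lands on the same Riccati structure as the paper, but both halves are implemented genuinely differently, and the comparison is instructive. For the inequality itself the paper never writes the optimal vector field in closed form: it sets up the recursion $h_k(r)=t\,\big(h_{k+1}(t)+\tfrac12\big)$ with $t=X_1(r)$, derives $r\,g'-g^2=\sum_{j=1}^kW_j+W_k\,(z\,h_{k+1}'-h_{k+1}^2)$, and takes the fixed point $h_{k+1}=h_k$; you instead exhibit the explicit supersolution $\gamma=\tfrac1{2r}\sum_{k=1}^N\prod_{j=1}^kX_j$ and verify the Riccati identity directly from $X_k'=\tfrac{X_k}{r}\prod_{j\le k}X_j$ together with the cancellation of the off-diagonal products --- this is precisely the ``algebraic property'' of \cite{MR2091354} that the paper invokes only implicitly, and your computation checks (the cross terms cancel, leaving $\tfrac1{4r^2}\sum_kY_k^2=r^{-2}\sum_kW_k$). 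For optimality the divergence is more substantial: the paper works on the ODE side, showing that any bounded positive solution of $r\,g'-g^2=W$ forces the stated bounds order by order, which yields optimality over \emph{all} admissible $W$ (and not merely over weights produced by the square expansion) only via the unstated fact that nonnegativity of the quadratic form implies the existence of such a positive solution (the Allegretto--Piepenbrink/Bessel-pair principle, cf.~\cite{MR2443723}). Your route through spherical harmonics, the substitution $u=r^{-(d-2)/2}v$ with $\tau=a-\log r$, and explicit logarithmically cut-off quasi-optimizers avoids that implicit step and is self-contained, at the price of the bookkeeping you yourself flag: each ground-state substitution $v=\tau^{1/2}w$, $\sigma=\log\tau$, must be checked to carry the weight $W_{k+1}$ into the shifted order-$k$ weight and to kill the boundary terms, and the cutoff errors must be controlled at every level, whereas the paper's single recursion formula disposes of all orders at once. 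One final caution: the normalization hypotheses in item (iii) should be read as fixing the coefficient of $W_k$ to be $1$ for $k<N$; your induction only uses the resulting one-sided bound $W\ge\sum_{j<N}W_j+(\ell_N-\varepsilon)\,W_N$ near the origin, which is indeed all that is needed.
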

The first part of Theorem~\ref{Thm:Filippas-Tertikas} has been obtained by Filippas and Tertikas in \cite[Theorem D, p. 190]{MR1918494} and the statement on the characterization of the best constants in the asymptotic expansion for all $W$ can be found in \cite[Theorem B', p. 192]{MR1918494}. Here we give a detailed proof based on the approach used in \cite{MR2091354}. Notice that if $W$ is not radially symmetric, some results can be recovered by applying Schwarz' symmetrization to $W(x)/|x|^2$.

Also notice that this expansion is independent of the value of $a$ used in the definition of $X_1$, as the behaviour of $W_j$ for $r$ close to $0_+$ does not depend on~$a$: what we have achieved is only an asymptotic expansion of the improvement~$W$ at the singularity.

\begin{proof} For simplicity, we split the proof in three steps.

\par\noindent\emph{Step 1.~Expansion of the square.\/} Suppose that $f=f(r) $ is a continuously differentiable function in an interval $[0,R]$ with $R>0$ such that $\Omega\subset B_R$, where $B_R$ denotes the ball of radius $R$ centered at the origin. Expanding the square $|\nabla u+f(r)\,\frac {x}{r^2}\,u|^2$ with $r=|x|$ and integrating by parts, we have
\[
0\leq\int_\Omega\left|\nabla u+f\,\frac{x}{r^2}\,u\right|^2\,dx=\int_\Omega|\nabla u|^2\,dx+\int _\Omega\(\frac{f^2}{r^2}-\frac{f'}r\,-\frac{d-2}{r^2}\,f\)|u|^2\,dx\;,
\]
that is
\[
\int_\Omega\(r\,f'+(d-2)\,f-f^2\)\frac{|u|^2}{|x|^2}\;dx\leq\int_\Omega|\nabla u|^2\,dx\;.
\]
Setting $g=f-(d-2)/2$, we get
\[
\frac 14\,(d-2)^2\int_\Omega\frac{|u|^2}{|x|^2}\;dx+\int_\Omega \frac{|u|^2}{|x|^2}\(r\,g'-g^2\)\,dx\leq\int_\Omega|\nabla u|^2\,dx\;.
\]
At this point, we observe that any bounded, positive solution on a neighborhood of $r=0_+$ of the equation
\[\label{Eqn:Positivity}
r\,g'-g^2=W\ge 0
\]
is such that $g(r)\le(a_0-\log r)^{-1}$ for some $a_0\in\R$ as $r\to0_+$ and, as a consequence, $\lim_{r\to 0_+}g(r)=0$. This proves that the constant $(d-2)^2/4$ in the expression of $\H[u]$ is optimal and proves Property (i).

\par\noindent\emph{Step 2.~Optimal behavior at first order in the asymptotic expansion.\/} It is worthwhile to notice that the function $r\mapsto X_1(r)=(a-\log r)^{-1}$ solves
\[
r\,g'-g^2=0\;.
\]
Also observe that $g(r)=\alpha\,X_1(r)$ solves
\[
r\,g'-g^2=(\alpha-\alpha^2)\,X_1^2\le\frac 14\,X_1^2
\]
with equality if and only if $\alpha=1/2$.

Let $h$ be such that $g(r)=X_1(r)\,h(s)$, with $s=-\log(X_1(r))$, so that $s\to+\infty$ as $r\to 0_+$. We claim that if
\[
\frac{W(r)}{W_1(r)}=4\,\frac{r\,g'(r)-g^2(r)}{X_1^2(r)}=4\(-h'(s)+h(s)-h^2(s)\)
\]
has a limit $\ell_1$ as $r\to0_+$, then $\ell_1\le1$. Let us prove it. If we have $\ell_1>1$, then
\[
-h'-\(h-\frac 12\)^2\sim\frac{\ell_1-1}4>0
\]
and then we know that $-\frac{h'}{(h-1/2)^2}\ge1$, so that for some constant $C$, we have
\[
\frac 1{h(s)-\frac12}>C+s
\]
for any $s$ large enough. This means that $\lim_{s\to\infty}h(s)=1/2$. Then we also know that
\[
h'\sim(1-\ell_1)/4{}<0\;,
\]
a contradiction. This proves (ii).

\par\noindent\emph{Step 3.~Induction.\/} Consider the sequence $(h_k)_{k\ge 1}$ of functions defined by
\[
h_1(r):=g(r)\;,\quad h_k(r)=t\,\big(h_{k+1}(t)+\tfrac 12\big)\;,\quad t=X_1(r)\in\(0,\delta_\Omega\)\;.
\]
An elementary computation shows that
\[
\frac{r\,h_k'(r)-h_k^2(r)}{t^2}-\frac 14=t\,h_{k+1}'(t)-h_{k+1}^2(t)\;.
\]
This implies that for all $k\geq1$ we find
\[\label{eq.2}
r\,g'(r)-g^2(r)=\sum _{j=1}^kW_j(r)+W_k(r)\big(z\,h_{k+1}'(z)-h_{k+1}^2(z)\big)
\]
with $z=X_k(r)$ and $r\in \(0,\delta_\Omega\)$. With this formula, it is clear that~\eqref{Ineq:HardyImproved} holds with $h_{k+1}=h_k$ for any $k\ge 1$, while proving the optimality of the constants in the asymptotic expansion goes at each iteration as in the computations of Step 2.\\ \end{proof}

As a consequence of Theorem~\ref{Thm:Filippas-Tertikas}, we also have an asymptotic expansion as $|x|\to\infty$ of an improved Hardy inequality. By the \emph{Kelvin transformation,} to any $u\in H^1_0(\Omega)$, we associate $v$ such that
\begin{equation}
v(x)=|x|^{2-d}\,u\big(|x|^{-2}\,x\big)\,,\label{Kelvintrans}
\end{equation}
where $v$ is defined on $\Omega^{\mathrm K}:=\left\{x\in\R^d\,:\, x/|x|^2\in\Omega\right\}$. By standard computations, we know that
\[
\iom{|\nabla u|^2}=\iomK{|\nabla v|^2}\;,\quad\iom{\frac{|u|^2}{|x|^2}}=\iomK{\frac{|v|^2}{|x|^2}}\;,
\]
and we can define $W_k^{\mathrm K}(r):=W_k(1/r)$ using the notations of Theorem~\ref{Thm:Filippas-Tertikas}, which can now be rewritten in the exterior domain $\Omega^{\mathrm K}$ as follows.
\begin {corollary}\label{Cor:Kelvin} Let $\Omega$ be a bounded domain containing the origin and assume that $a=a_\Omega$. With $W=\sum_{j=1}^\infty W_j^{\mathrm K}$, we have
\[\label{Ineq:HardyImprovedK}
\iomK{W\,\frac{|v|^2}{|x|^2}}\le\iomK{|\nabla v|^2}-\frac14\,(d-2)^2\iomK{\frac{|v|^2}{|x|^2}}\quad\forall\;v\in H^1_0(\Omega^{\mathrm K})\;.
\]
Moreover, such a function $W$ is optimal in the following sense. Assume that the above inequality holds for some nonnegative, radial function $W\!$. Then we have:
\begin{itemize}
\item[(i)] if $W$ converges as $r\to \infty$ to some limit $\ell\in[0,+\infty]$ then $\ell=0$,
\item[(ii)] if $\lim_{r\to \infty}W=0$ and if $\frac W{W_1^{\mathrm K}}$ converges as $r\to \infty$ to some limit $\ell_1\in[0,+\infty]$ then $\ell_1\le1$,
\item[(iii)] for any $N\ge2$ and with the convention $W_0^{\mathrm K}:=0$, if
\[
\lim_{r\to\infty}W=0,\quad\lim_{r\to\infty}\frac{W-\sum_{j=0}^{k-1}W_j^{{\mathrm K}}}{\sum_{j=1}^k W_j^{\mathrm K}}=1\quad\forall\;k\in\{1,\,2\ldots N-1\}
\]
and if $\frac{W-\sum_{j=1}^{N-1}W_j^{\mathrm K}}{\sum_{j=1}^{N}W_j^{\mathrm K}}$ converges as $r\to 0_+$ to some limit $\ell_{N}\in[0,+\infty]$, then $\ell_{N}\le1$.
\end{itemize}
\end{corollary}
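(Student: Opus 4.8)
The plan is to deduce the Corollary directly from Theorem~\ref{Thm:Filippas-Tertikas} by transporting both the inequality and the three optimality statements through the Kelvin transformation~\eqref{Kelvintrans}. The only analytic ingredient I need is a weighted version of the stated invariances. Writing $y=x/|x|^2$ for the inversion, so that $|y|=1/|x|$ and $dx=|y|^{-2d}\,dy$, a direct computation using $|v(x)|^2=|x|^{4-2d}\,|u(y)|^2$ shows that for every radial function $\psi$ one has
\[
\iomK{\psi(1/|x|)\,\frac{|v|^2}{|x|^2}}=\iom{\psi(|y|)\,\frac{|u|^2}{|y|^2}}\;,
\]
the stated invariance of $\iom{|u|^2/|x|^2}$ being the special case $\psi\equiv1$. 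Everything else is bookkeeping.

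For the inequality itself, I would take an arbitrary $v\in H^1_0(\Omega^{\mathrm K})$ and let $u\in H^1_0(\Omega)$ be its inverse Kelvin transform, so that $v$ is recovered from $u$ through~\eqref{Kelvintrans}. Choosing $\psi=W=\sum_{j\ge1}W_j$ in the identity above and using $W_j(1/|x|)=W_j^{\mathrm K}(|x|)$, the left-hand side becomes exactly $\iomK{\sum_j W_j^{\mathrm K}\,|v|^2/|x|^2}$, while the right-hand side is bounded by $\H[u]$ by virtue of Theorem~\ref{Thm:Filippas-Tertikas}. Since the invariance of the Dirichlet energy together with the case $\psi\equiv1$ identifies $\H[u]$ with $\iomK{|\nabla v|^2}-\tfrac14(d-2)^2\iomK{|v|^2/|x|^2}$, the claimed exterior inequality follows at once.

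For the optimality, I would run the same change of variables in reverse. Assuming the exterior inequality holds for some nonnegative radial $W$, I set $\widehat W(s):=W(1/s)$; applying the identity with $\psi=\widehat W$ to the Kelvin transform of an arbitrary $u\in H^1_0(\Omega)$ shows that $\widehat W$ is an admissible weight for the interior inequality~\eqref{Ineq:HardyImproved}. The optimality part of Theorem~\ref{Thm:Filippas-Tertikas} then constrains $\widehat W$ as $s\to0_+$. It remains only to translate the conclusions back: since $\widehat W(s)=W(r)$ and $W_j(s)=W_j^{\mathrm K}(r)$ under $r=1/s$, each ratio $\bigl(\widehat W-\sum_{j=0}^{k-1}W_j\bigr)/\bigl(\sum_{j=1}^kW_j\bigr)$ coincides with $\bigl(W-\sum_{j=0}^{k-1}W_j^{\mathrm K}\bigr)/\bigl(\sum_{j=1}^kW_j^{\mathrm K}\bigr)$, and the regime $s\to0_+$ corresponds to $r\to\infty$. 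Hence conditions (i)--(iii) for $\widehat W$ at the origin become precisely conditions (i)--(iii) for $W$ at infinity.

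I do not expect any genuine obstacle, since all the hard content is already packaged in Theorem~\ref{Thm:Filippas-Tertikas}. The two points that require care are administrative rather than substantial: first, confirming that the inverse Kelvin transform maps $H^1_0(\Omega^{\mathrm K})$ bijectively onto $H^1_0(\Omega)$, so that no admissible test function is lost in either direction; and second, checking that the hypotheses match, in particular that the local boundedness of $W$ invoked in the interior statement corresponds to the behaviour of $W$ near infinity that is automatically furnished by the convergence assumptions in (i)--(iii).
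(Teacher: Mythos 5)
Your proposal is correct and follows essentially the same route as the paper, which derives the corollary directly from Theorem~\ref{Thm:Filippas-Tertikas} via the Kelvin transformation \eqref{Kelvintrans}, the invariance of the Dirichlet and Hardy integrals, and the identification $W_k^{\mathrm K}(r)=W_k(1/r)$. Your weighted identity $\iomK{\psi(1/|x|)\,|v|^2/|x|^2}=\iom{\psi(|y|)\,|u|^2/|y|^2}$ is exactly the computation the paper leaves implicit, and the translation of the limits $s\to0_+$ into $r\to\infty$ matches the intended statement.
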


\section{Improved Poincar\'e inequality: the gaussian case}\label{Sec:Poincaregaussian}

Let us consider now the gaussian measure
\[
d\mu(x)=\mu(x)\,dx\;,\quad \mu(x)=\frac{e^{-|x|^2/2}}{(2\pi)^{d/2}}\;.
\]
Suppose that $d>2$ and define the functions
\[
t:=\frac 1{1+r^{d-2}}\;,\quad \delta:=-\frac t{\log(1-t)}\;,\quad X(t):=\frac{\log(1-t)}{\log(1-t)-1}\;.
\]

For any $r>0$ we have $t\in(0,1)$ and the functions $X$, $\delta$ are well defined. Besides, since $X(t)<1$ for any $t\in(0,1)$, we have that the interval $(0,1)$ is stable under the action of $X$. Moreover, for all $k\geq 0$, we set
\be{XYZW}\begin{array}{l}
X_0(t)=t\;,\quad X_{k+1}=X\circ X_k\;,\\
Y_0(t)=1\;,\quad Y_{k+1}=\(\delta\circ X_k\)^2=\frac {X^2_{k}(t)}{\log^2(1-X_{k}(t))}\,,\\
Z_0(t)=1\;,\quad Z_{k+1}(t)=X_k^2(t)\,\text{ for }k\geq0\;,\\
W_{k}(t)=Z_k(t)\mbox{ \thinspace if } k=0,1 \text{ and } W_k(t)=\Big(\;{\displaystyle\prod_{j=1}^{k-1}}Y_j(t)\Big)\,Z_k(t)\mbox{ if } k\ge 2\;.
\end{array}\ee

\begin{theorem}\label{Thm:ImprovedPoincare} Suppose that $d\ge3$. With the above notations, \eqref{XYZW}, we have
\[
\G[u]\geq\frac14\,(d-2)^2\int_{\R^d}\frac{u^2}{|x|^2}\(\,\sum_{k=0}^\infty W_k(t)\)\,d\mu
\]
for any \hbox{$u\in H^1(d\mu)$}, where $t=1/(1+r^{d-2})$, $r=|x|$. Moreover, the expansion is asymptotically optimal, in the sense that at any order $N\geq 0$, if we consider an improved inequality of the form
\[
\G[u]\geq\frac14\,(d-2)^2\int_{\R^d}\frac{u^2}{|x|^2}\(\sum_{k=0}^NW_k(t)+\prod_{j=0}^NY_j(t)\,R_N(t)\)\,d\mu
\]
and if $R_N(t)$ converges as $t\to0$\,to some limit $\ell_N\in [0,\infty)$, then $\ell_N\leq 1$.\end{theorem}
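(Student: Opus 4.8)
The plan is to follow, step by step, the three-part scheme used in the proof of Theorem~\ref{Thm:Filippas-Tertikas}, adapting each ingredient to the gaussian weight. \textit{Step 1: expansion of the square.} First I would expand $0\le\idmu d{\bigl|\nabla u+f(r)\,\frac{x}{r^2}\,u\bigr|^2}$ for a radial $f$, integrating the cross term by parts; the only place where the gaussian rather than the Lebesgue measure enters is through $\nabla\mu=-x\,\mu$, which adds a term $-f$ to the divergence $\frac{f'}r+\frac{(d-2)f}{r^2}$. Writing $f=\frac{d-2}2+g$ as in the Hardy case and collecting the two lower-order contributions $\frac d2-\frac{r^2}4$ coming from the definition of $\G$, one is left with
\[
\G[u]\ge\idmu d{\Bigl(\frac{(d-2)^2}{4\,r^2}+\frac{r\,g'-g^2}{r^2}+1-g-\frac{r^2}4\Bigr)u^2}.
\]
The leading term $\frac{(d-2)^2}{4r^2}$ is exactly the $W_0$ contribution.

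\textit{Step 2: reduction to a Hardy-type Riccati equation.} The substitution $g=-\frac{r^2}2+p$ (equivalently $f=\frac{d-2}2-\frac{r^2}2+p$, so that $-\frac x2$ is precisely the gaussian drift) makes the three gaussian correction terms telescope: a direct computation gives $r\,g'-g^2+r^2-g\,r^2-\frac{r^4}4=r\,p'-p^2$, so the bracket above collapses to $\frac{(d-2)^2}{4r^2}+\frac{r\,p'-p^2}{r^2}$. Hence the whole problem reduces to solving
\[
r\,p'-p^2=\tfrac14\,(d-2)^2\,W,\qquad W\ge0,
\]
which is formally the same Riccati equation $r\,g'-g^2=W$ as in the Hardy case. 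The essential difference is that $r$ now runs over all of $(0,\infty)$, so $p$ must be \emph{globally} bounded; this is what rules out the naive homogeneous solution $(b-\log r)^{-1}$ (always singular at $r=e^b$) and forces the genuinely gaussian expansion.

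\textit{Step 3: compactification and the iterated structure.} I would then pass to $t=1/(1+r^{d-2})\in(0,1)$, for which $r\,\partial_r=-(d-2)\,t(1-t)\,\partial_t$, turning the unbounded range of $r$ into the bounded interval $(0,1)$ that is stable under $X$. In the variable $w=-\log(1-t)$ the map $X$ acts as $w\mapsto\log(1+w)$, which is the same iterated-logarithm map that generates the $X_k$ in the Hardy case (where $-\log r\mapsto\log(a-\log r)$); this is the source of the striking parallel and is what produces $X_k$, $\delta$, $Z_k$, $Y_k$, and hence $W_k$ as in \eqref{XYZW}. Defining a sequence $h_k$ through the map $X$ and establishing the one-step identity---``defect at level $k$ divided by $Z_{k+1}$, minus a fixed quantity, equals the defect at level $k+1$''---I would obtain by induction the finite improvements $\sum_{k=0}^NW_k$ plus a nonnegative remainder $\prod_{j=0}^NY_j\,R_N$; taking the remainder nonnegative and letting $N\to\infty$ yields the claimed inequality.

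\textit{Step 4: optimality and the main obstacle.} For the optimality I would rescale by the appropriate iterate, as in the $g=X_1\,h(s)$ substitution of Step~2 of the proof of Theorem~\ref{Thm:Filippas-Tertikas}, converting the Riccati equation into an autonomous equation for a profile $h$ whose inhomogeneity is $R_N$; if $R_N\to\ell_N>1$ as $t\to0$, the resulting strict sign of the analogue of $-h'-(h-\tfrac12)^2$ forces $h\to\tfrac12$ and simultaneously $h'<0$ with a fixed negative limit, a contradiction, giving $\ell_N\le1$. I expect the optimality to be the main obstacle: unlike the inequality, which is essentially bookkeeping once the Riccati reduction is in place, the optimality must exploit global solvability on $(0,1)$ to show that no globally admissible $W$ can beat the coefficient $1$ at $t\to0$. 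A secondary technical point is to check that the change of variables reproduces \eqref{XYZW} exactly---that is, that $\delta$ and $Z_k$ are the correct gaussian transcriptions of the constants $\tfrac12$ and $\tfrac14$ in the Hardy recursion.
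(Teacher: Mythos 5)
Your Steps 1 and 2 are correct and, after unwinding notation, are exactly the paper's Step 1: the paper perturbs by $g(r)\,u\,x$, sets $h=r^2\,(g+\tfrac12)$ and arrives at $\G[u]\ge(d-2)^2\int_{\R^d}\frac{u^2}{|x|^2}\,f\,d\mu$ with $(d-2)^2 f=r\,h'+(d-2)\,h-h^2$; your $p$ is precisely $h-\tfrac{d-2}2$, and indeed $r\,h'+(d-2)\,h-h^2=r\,p'-p^2+\tfrac14\,(d-2)^2$. So the telescoping you observe is real, and the computation is right.

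The gap is in Step 3, and it is fatal to the plan as written. If you insist on the pure Hardy form $r\,p'-p^2=\tfrac14\,(d-2)^2\,W$ with $W\ge0$ on all of $(0,\infty)$ and $p$ globally bounded, the only solution is $p\equiv0$, hence $W\equiv0$: from $r\,p'\ge p^2\ge0$ the function $p$ is nondecreasing, so it has limits $L_\pm$ at $0_+$ and $+\infty$; integrating $p'\ge p^2/r$ forces $L_+\le 0$ (otherwise $p$ grows like $\log r$) and $L_-\ge0$ (otherwise $p\to-\infty$ at the origin), whence $L_-=L_+=0$ and $p\equiv 0$. Thus your reduction does not ``force the genuinely gaussian expansion'' --- it forces no expansion at all, which is consistent with the fact that, writing $v=u\,e^{-|x|^2/4}$ so that $\G[u]$ becomes a multiple of $\int_{\R^d}|\nabla v|^2\,dx$, a nonnegative correction added on top of the full weight $\tfrac14(d-2)^2|x|^{-2}$ would improve the optimal Hardy inequality on the whole space. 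The paper avoids this dead end by running the induction on the operator \emph{with} the linear term, $r\,h'+(d-2)\,h-h^2$, whose homogeneous solutions are not $(b-\log r)^{-1}$ but the globally bounded, positive functions $h=\frac{d-2}{1+C(d-2)\,r^{d-2}}$; these generate the algebraic variable $t=1/(1+r^{d-2})$, the first correction $W_1=t^2\sim r^{-2(d-2)}$ (algebraically small, not the logarithmic $\tfrac14(a+\log r)^{-2}$ that your Riccati form would suggest), and the iterated-logarithm map acts on $-\log(1-t)\approx t$, i.e.\ on the scale $r^{-(d-2)}$, not on $\log r$. Accordingly, what the paper's induction actually yields is $4f=\sum_{k\ge1}W_k$, a resummation which stays below $1$, rather than $1+\sum_{k\ge1}W_k$. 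Finally, the real content of Step 3 --- the exact one-step identity \eqref{firstit}, which requires the specific choices $\delta(t)=-t/\log(1-t)$ and $s=X(t)=\log(1-t)/(\log(1-t)-1)$ --- is asserted but never derived in your proposal, and your optimality sketch carries over the Hardy signs ($-h'-(h-\tfrac12)^2$) instead of the equation $H'+H-H^2\sim\ell/4$ that actually appears in the limit $r\to+\infty$.
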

We may notice that no improvement can be achieved on the terms of order $|x|^2$ and $1$: if we had $\G[u]\ge\ell_{-2}\int_{\R^d}|u|^2\,|x|^2\,d\mu+\ell_{-1}\int_{\R^d}|u|^2\,d\mu$ for any $u\in H^1(d\mu)$, then testing the inequality with $u(x)=\exp(-(1-\varepsilon)\,|x|^2/4)$ shows that $\ell_{-2}\le0$ and $\ell_{-1}\le0$.

\begin{proof} As we did for Theorem~\ref{Thm:Filippas-Tertikas}, we split the proof in three steps.

\par\noindent\emph{Step 1.~Expansion of the square.\/} Let $g$ be any radial smooth function. Then, for any $u\in H^1\(d\mu\)$ by expanding the square $\int_{\R^d}|\nabla u+g(r)\,u\,x|^2\,d\mu$ and integrating by parts, we get
\[
\int_{\R^d}|u|^2\(r\,g'+d\,g-r^2\,(g^2+g)\)\,d\mu\leq\int_{\R^d}|\nabla u|^2\,d\mu\;.
\]
With the function $h$ defined by
\[
h(r):=r^2\(g(r)+\tfrac 12\)\,,
\]
we obtain a correction term to \eqref{Gaussian}, namely
\begin{multline*}
\int_{\R^d}\frac{|u|^2}{|x|^2}\(r\,h'+(d-2)\,h-h^2\)\,d\mu\\
\leq\int_{\R^d}|\nabla u|^2\,d\mu+\frac d2\int_{\R^d}|u|^2\,d\mu-\frac14\int_{\R^d}|u|^2\,|x|^2\,d\mu=\G[u]\;.
\end{multline*}
Consider the function $f$ such that
\[
(d-2)^2\,f(r):=r\,h'(r)+(d-2)\,h(r)-h^2(r)\;.
\]
The expansion of the square now amounts to
\[
(d-2)^2\int_{\R^d}\frac{|u|^2}{|x|^2}\,f(r)\,d\mu\le\G[u]\;.
\]
Our purpose is to identify the best possible function $f$.

\par\noindent\emph{Step 2.~Optimal behavior at zero order in the asymptotic expansion.\/}
Our goal is to maximize $f$ as $r\to+\infty$. Assume first that $4\,f(r)$ has a limit $\ell_0>1$ as $r\to\infty$. If we set $h(r)=(d-2)\,H(s)$ with $s=(d-2)\log r$, then $H$ solves
\[
H'+H-H^2\sim\frac{\ell_0}4\quad\mbox{as}\quad s\to\infty\;.
\]
For $s>0$, large enough, there exists $\varepsilon>0$ such that
\[
H'\ge\(H-\tfrac 12\)^2+ \varepsilon\ge \varepsilon\;,
\]
so that $\lim_{s\to\infty}H(s)=\infty$. But we can also write that $\frac{H'}{(H-1/2)^2}\ge 1$, so that, for some constant $C$,
\[
\frac1{H(s)-\frac12}<C-s\;,
\]
if $s$ is taken large enough. Thus we get $\lim_{s\to\infty}H(s)=\frac12$, a contradiction. On the other hand $H(s)=\frac12$ for any $s\in\R$ is admissible, thus proving that $\lim_{r\to\infty}f(r)=1/4$ can be obtained.

\par\noindent\emph{Step 3.~Induction.\/}
Observe that the nontrivial global solutions to the equation
\[
r\,h'+(d-2)\,h-h^2=0
\]
are given by $h(r)=\frac{d-2}{1+C\,(d-2)\,r^{d-2}}$ for an arbitrary constant $C$. This suggests to set
\[
t=t(r):=\frac 1{1+r^{d-2}}\;.
\]
If
\[
h(r)=(d-2)\,h_0(t)\;,
\]
then $f(r)$ can be rewritten in terms of $t$ as
\[
f(r)=-\,t\,(1-t)\,h_0'(t)+h_0(t)-h_0^2(t)\;.\label{eq.8}
\]
If $h_0(t)=\alpha\,t$ for some $\alpha\in\R$, then $f(r)=t^2\,\alpha\,(1-\alpha)$\,takes its largest possible value, namely $f(r)=t^2/4$, for $\alpha=1/2$. Now if
\[
h_0(t)=\frac{t}2+H_0(t)\;,
\]
we get
\[
f(r)-\frac{t^2}4=-\,t\,(1-t)\,H_0'(t)+(1-t)\,H_0(t)-H_0^2(t)\;.
\]

If we set $H_0(t):=\delta(t)\,h_1(s)$ where $s=X(t)$, then we have
\[
H_0'(t)=\delta'(t)\,h_1(s)+\delta(t)\,h_1'(s)\, X'(t)
\]
and by the definition of $X$ and $\delta$, it is not difficult to check that
\begin{equation}\label{firstit}
\frac{f(r)-\frac{t^2}4}{\delta^2(t)}=-\,s\,(1-s)\,h_1'(s)+h_1(s)-h_1^2(s)\;.
\end{equation}
Hence the r.h.s.~in \eqref{firstit} exactly takes the form of $f(r)$, with $t$ and $h_0$ replaced by $s$ and $h_1$ respectively. Since $\lim_{t\to 0}X(t)=0$, we can iterate this procedure.

Assume first that $W=\sum_{k=1}^\infty W_k$. By \eqref{XYZW} and \eqref{firstit}, we find that
\[
h_1=h_0\;.
\]
Hence, if we define $(R_k)_{k\ge0}$ by
\[
R_0(t):=4\, f(r)\quad\mbox{and}\quad R_{k+1}:=R_k\circ X_{k+1}\quad\mbox{for any}\quad k\geq0\;,
\]
then for any $N\geq 1$ we obtain
\begin{align*}
R_0(t) &=t^2+4\,\delta^2(t)\left[-\,s\,(1-s)\,h_1'(s)+h_1(s)-h_1^2(s)\right]\\
&=Z_1(t)+Y_1(t)\(R_0\circ X\)(t)\\
&=Z_1(t)+Y_1(t)\,R_1(t)\\
&=Z_1(t)+Y_1(t)\(Z_2(t)+Y_2(t)\,R_2(t)\)\\
&=Z_1(t)+Y_1(t)\,Z_2(t)+Y_1(t)\,Y_2(t)\,R_2(t)\\
&=\ldots=\sum_{k=1}^NW_k(t)+{\displaystyle\prod_{j=1}^N}Y_j(t)\,R_N(t)\;.
\end{align*}
Otherwise, we already know from Step 2 that
\[
R_0(t):=4\big[-t\,(1-t)\,h_0'(t)+h_0(t)-h_0^2(t)\big]
\]
is such that, if $\lim_{t\to0}R_0(t)=\ell_0$, then $\ell_0\le1$, and if $\ell_0=1$, then
\[
R_0(t)=Z_1(t)+Y_1(t)\,R_1(t)\quad\mbox{with}\quad R_1(t):=4\big[-\,t\,(1-t)\,h_1'(t)\,h_1(t)-h_1^2(t)\big]
\]
and the conclusion follows by a straightforward iteration.\end{proof}

Now we study the case $d=2$. First, set $t=1/\log r$. Then we let $a>1$ and define $R^\star=R^\star(a)=e^{1/t^\star}$ where $t^\star$ is given as the \textit{largest} positive solution of $t=X(t)$ with
\[
X(t):=\frac1{a-\log t}\;.
\]
Notice that $t=X(t)$ has a unique solution such that $t>1$. We also observe that $[0,t^\star]$ is stable under the action of $X$ (also see \cite{MR2091354} for further properties of $X$). Also notice that $t^\star>e^{a-1}$. With this new definition of $X$ and $\delta(t)=t$, $t=1/\log r$, we can now construct $X_k$, $Y_k=Z_k$ and $W_k$ as in~\eqref{XYZW}:
\be{XYZW2}\begin{array}{l}
X_0(t)=t\;,\quad X_{k+1}=X\circ X_k\;,\\
Y_{0}=1\;,\quad Y_{k+1}=X_k^2\,\text{ for }k\geq0\;,\\
W_0(t)=0\;,\quad \text{and } W_k(t)={\displaystyle\prod_{j=1}^k}Y_j(t)\mbox{ if } k\ge 1\;.
\end{array}\ee

\begin{theorem}\label{Thm:GaussianDeux} Suppose that $d=2$. For any function $u\in H^1(d\mu)$ with support outside the ball of radius $R^\star$, we have
\[
\mathsf{G}[u]\geq \frac14\int_{\R^2}\frac{u^2}{|x|^2}\(\sum_{k=1}^\infty W_k(t)\)\,d\mu
\]
with $t=1/\log r$, $r=|x|$ and $W_k$ defined by \eqref{XYZW2}. Moreover, the expansion is asymptotically optimal, in the sense that at any order $N\geq 1$, if we consider an improved inequality of the form
\begin{equation*}
\mathsf{G}[u]\geq\frac14\int_{\R^2}\frac{u^2}{|x|^2}\(\sum_{k=0}^{N-1} W_k(t)+W_N(t)\, R_N(t)\)\,d\mu
\end{equation*}
and if $R_N(t)$ converges as $t\rightarrow 0$ to some limit $\ell_N\in [0,\infty)$, then $\ell_N\leq 1$.
\end{theorem}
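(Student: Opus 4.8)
The plan is to imitate, essentially line by line, the three-step scheme used for Theorem~\ref{Thm:ImprovedPoincare}, taking advantage of the fact that when $d=2$ the expansion of the square degenerates into a purely Hardy-type problem placed at infinity. First I would run Step~1 exactly as there but with $d=2$: expanding $\int_{\R^2}|\nabla u+g(r)\,u\,x|^2\,d\mu\ge0$, integrating by parts and setting $h(r)=r^2\bigl(g(r)+\tfrac12\bigr)$. Because the term $(d-2)\,h$ vanishes at $d=2$, what survives is
\[
\int_{\R^2}\frac{|u|^2}{|x|^2}\,\bigl(r\,h'-h^2\bigr)\,d\mu\le\G[u],
\]
so the whole question reduces to maximizing the Riccati quantity $r\,h'-h^2$ as $r\to\infty$. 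This is formally the same quantity $r\,g'-g^2$ that drives the Hardy inequality in Theorem~\ref{Thm:Filippas-Tertikas}, now with the singularity at infinity rather than at the origin, so I expect the argument to parallel its Kelvin form, Corollary~\ref{Cor:Kelvin}; correspondingly the hypothesis $\mathrm{supp}\,u\subset\R^2\setminus B_{R^\star}$ is the exterior-domain analogue of the boundedness of $\Omega$ there.

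Next I would pass to the variable $t=1/\log r$ and write $h(r)=h_0(t)$. Using $r\,\tfrac{dt}{dr}=-t^2$ one gets $r\,h'-h^2=-t^2\,h_0'(t)-h_0^2(t)=:\Psi[h_0](t)$, and among the profiles $h_0=\alpha\,t$ the largest value of $\Psi$ is $\tfrac14\,t^2=\tfrac14\,W_1$, attained at $\alpha=-\tfrac12$. Writing $h_0=-\tfrac t2+t\,h_1\bigl(X(t)\bigr)$ with $X(t)=(a-\log t)^{-1}$ and using $t\,X'(t)=X(t)^2$, I expect to land on the self-similar recursion
\[
\frac{\Psi[h_0](t)-\tfrac14\,t^2}{t^2}=\Psi[h_1]\bigl(X(t)\bigr),
\]
the exact $d=2$ analogue of \eqref{firstit}. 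Here $\delta(t)=t$, so the factor $1-t$ present in higher dimensions is absent and the computation is cleaner.

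Iterating this identity and telescoping through the definitions~\eqref{XYZW2}, I would obtain, for every $N\ge1$,
\[
4\,\Psi[h_0](t)=\sum_{k=1}^{N}W_k(t)+W_N(t)\,\rho_N\bigl(X_N(t)\bigr),\qquad \rho_k:=4\,\Psi[h_k],
\]
and the self-reproducing choice $h_1=h_0$ (so that $\rho_N=\rho_0$ at every stage) regenerates the whole series $\sum_{k\ge1}W_k$, which yields the stated inequality. The role of $R^\star$ is exactly to force $t=1/\log r\le t^\star$ on $\mathrm{supp}\,u$, so that every iterate $X_k(t)$ stays inside the interval $[0,t^\star]$ that is stable under $X$ and the construction is well defined.

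For optimality I would argue as in Step~2. Matching the expansion above with the order-$N$ form in the statement identifies the coefficient as $R_N(t)=\rho_{N-1}\bigl(X_{N-1}(t)\bigr)/X_{N-1}(t)^2=4\,\Psi[h_{N-1}](\tau)/\tau^2$ with $\tau=X_{N-1}(t)\to0$, so it suffices to treat the case $N=1$ for an arbitrary supersolution. Setting $\sigma=-\log t$ and $h_0=t\,p(\sigma)$ turns $4\,\Psi/t^2$ into $4p'-4\bigl(p+\tfrac12\bigr)^2+1$ (with $p'=dp/d\sigma$); if this tends to $\ell>1$, then $p'\ge\bigl(p+\tfrac12\bigr)^2+\varepsilon$ for $\sigma$ large, which simultaneously forces $p\to+\infty$ and, after integrating $p'/(p+\tfrac12)^2\ge1$, forces $p+\tfrac12\to0^-$, a contradiction; hence $\ell_N\le1$. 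The main obstacle I anticipate is not conceptual but bookkeeping: making the recursion close exactly, so that the constants hidden in $X$ and in the $Y_k$, $W_k$ conspire with $t\,X'=X^2$ to reproduce precisely $\Psi[h_1]\circ X$, and checking that the logarithmic variable $t=1/\log r$ together with the cutoff at $R^\star$ really confines every iterate to the stable interval. This last point, including the disappearance of any constant-order ($W_0$) improvement, has no counterpart in the case $d\ge3$ and is exactly why the statement must restrict to functions supported far from the origin.
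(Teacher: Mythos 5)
Your proposal is correct and follows essentially the same route as the paper: the same expansion of the square reducing everything to $f=r\,h'-h^2$, the same change of variable $t=1/\log r$ giving $f=-t^2h_0'-h_0^2$ with optimal linear profile $h_0=-t/2$, the same closure of the recursion via $t\,X'=X^2$ for $X(t)=(a-\log t)^{-1}$, and the same Riccati blow-up argument for optimality. The only piece the paper spells out that you merely flag is the preliminary argument (via $H(t)=h(r)$, $t=\log r$, and $H'-H^2\sim\ell$) that no constant-order term can be added, i.e.\ that $W_0=0$; since the theorem's optimality claim starts at $N\ge1$, this does not affect the validity of your outline.
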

\begin{proof} The expansion of the square method reduces the problem to find the best possible function $f(r)=r\,h'(r)-h^2(r)$ such that
\[
\int_{\R^2}\frac{|u|^2}{|x|^2}\,f(r)\,d\mu
\leq\int_{\R^2}|\nabla u|^2\,d\mu-\frac14\int_{\R^2}|u|^2\,|x|^2\,d\mu+\int_{\R^2}|u|^2\,d\mu\;.
\]
If $f(r)\sim\ell\ge0$ as $r\to+\infty$, then it follows that $H(t)=h(r)$ with $t=\log r$ solves
\[
H'-H^2\sim\ell
\]
as $t\to\infty$. On the one hand, if $\ell>0$, then $H(t)\ge\frac\ell 2\,t$ as $t\to\infty$, \emph{i.e.}~$h(r)\ge\frac\ell 2\,\log r\to+\infty$, and on the other hand,
\[
\frac{h'}{h^2}\ge\frac 1r
\]
means that for some constant $C$ and for $r$ large enough, we have
\[
C-\frac 1{h(r)}\ge\log r
\]
which implies that $\lim_{r\to\infty}h(r)=0$, a contradiction. As a consequence, $\ell=0$. In other words, we have shown that the first term in the expansion (that is, the term of order $1/|x|^2$) is $W_0=0$.

The nontrivial solutions to the equation
\[
r\,h'-h^2=0
\]
are
\[
h=\frac1{C-\log r}\;,
\]
with $C$ being an arbitrary constant. If we set $t=1/\log r$ and consider $h_0$ such that $h(r)=h_0(t)$, we easily infer that
\[
f(r)=-\,t^2\,h_0'(t)-h_0^2(t)\;. \label{eq.3}
\]
If $h_0(t)=\alpha\,t$ for some $\alpha\in\R$, the largest possible value of $f(r)=-\,t^2\(\alpha^2+\alpha\)$ is $t^2/4$. It is achieved for $\alpha=-1/2$. Now if
\[
h_0(t)=-\frac t2+H_0(t)\;,
\]
we get
\[
f(r)-\frac{t^2}4=-\,t^2\,H_0'(t)+t\,H_0(t)-H_0^2(t)\;. \label{eq.4}
\]
As in the proof of Theorem~\ref{Thm:ImprovedPoincare}, if we set
\[
H_0(t)=t\,h_1(s)\quad\mbox{and}\quad s=X(t)\;,
\]
using the definition of $X$, it is easy to verify that
\[
\frac{f(r)-\,t^2/4}{t^2}=-s^2\,h_1'(s)-h_1^2(s)\;.
\]
Now it is enough to argue as in the proof of Theorem~\ref{Thm:ImprovedPoincare} to conclude.\end{proof}

As a concluding remark, we notice that we can combine the results of Theorems~\ref{Thm:ImprovedPoincare} and \ref{Thm:GaussianDeux} with the method used for proving~\eqref{Ineq:Mouhot} to get, for any $u\in H^1(\R^d,d\mu)$ such that $\bar u=\idmu du=0$,
\[
2\,(d+2)\int_{\R^d}|\nabla u|^2\,d\mu\ge\int_{\R^d}u^2\,|x|^2\,d\mu+(d-2)^2\sum_{k=0}^\infty\int_{\R^d}\frac{u^2}{|x|^2}\,W_k(t)\,d\mu
\]
if $d\ge3$ and $W_k$ is defined as in~\eqref{XYZW}, and, under the same assumptions as in Theorem~\ref {Thm:GaussianDeux},
\[
4\int_{\R^2}|\nabla u|^2\,d\mu\ge\int_{\R^2}u^2\,|x|^2\,d\mu+\sum_{k=1}^\infty\int_{\R^2}\frac{u^2}{|x|^2}\,W_k(t)\,d\mu
\]
if $d=2$ and $W_k$ is defined as in~\eqref{XYZW2}, thus improving also~\eqref{Ineq:Mouhot} for any $d\ge 2$.

\section{Improved Hardy-Poincar\'e inequalities}\label{Sec:PoincareOther}

\subsection{Hardy-Poincar\'e inequalities}

In this section, we shall consider improvements of a family of \emph{Hardy-Poincar\'e inequalities} which has been investigated in \cite{BBDGV-CRAS,BBDGV,BDGV}. Let $h_\alpha(x):=(1+|x|^2)^\alpha$ and define $d\mu_\alpha(x)=h_\alpha(x)\,dx$, for any $\alpha\le0$. From \cite{BDGV}, we know that
\be{Eqn:Hardy-Poincare}
\Lambda_{\alpha,d}\idmua{\alpha-1}{|u-\mu_{\alpha-1}(u)|^2}\le\idmua{\alpha}{|\nabla u|^2}\quad\forall\;u\in H^1(\R^d,d\mu_\alpha)
\ee
with the convention
\[\begin{array}{ll}
\mu_{\alpha-1}(u):=\idmua{\alpha-1}u\quad&\mbox{if}\quad\alpha\in(-\infty,-(d-2)/2)\;,\\
\mu_{\alpha-1}(u):=0\quad&\mbox{if}\quad\alpha\in(-(d-2)/2,0)\;.
\end{array}\]
The inequality holds not only in $H^1(\R^d,d\mu_\alpha)$ but also in the larger space $\{u\in L^2(\R^d,d\mu_{\alpha-1})\,:\,\nabla u\in L^2(\R^d,d\mu_\alpha)\}$. This is easy to establish by density of smooth functions with support in $\R^d\setminus\{0\}$. The optimal value of $\Lambda_{\alpha,d}$ has been determined in \cite{BDGV}. If $d\ge 2$, we have
\begin{eqnarray*}
&&\Lambda_{\alpha,d}=-2\,\alpha\quad\mbox{if}\quad\alpha\in(-\infty,-d)\;,\\
&&\Lambda_{\alpha,d}=-2\,(d+2\,\alpha)\quad\mbox{if}\quad\alpha\in(-d,-(d+2)/2)\;,\\
&&\Lambda_{\alpha,d}=\tfrac 14\,(d-2+2\,\alpha)^2\quad\mbox{if}\quad\alpha\in(-(d+2)/2,0)\;.
\end{eqnarray*}
Notice that for $\alpha=-(d-2)/2$, we find $\Lambda_{\alpha,d}=0$ and the inequality fails. See~\cite{BGV} for more details in such a case. In the limit case $\alpha=0$, if we apply~\eqref{Eqn:Hardy-Poincare} to $u_\lambda(x)=\lambda^{d/2}\,u(\lambda\,x)$ and take the limit $\lambda\to 0_+$, we recover the Hardy inequality \eqref{Ineq:Hardy}. If we apply~\eqref{Eqn:Hardy-Poincare} to $u_\lambda(x)=u(\lambda\,x)$ with $\lambda=\sqrt{2\,|\alpha|}$ and take the limit $\alpha\to-\infty$, we recover the gaussian Poincar\'e inequality~\eqref{Ineq:Poincare}. Inequality~\eqref{Eqn:Hardy-Poincare} is therefore an interesting family of inequalities which interpolates between the Hardy inequality \eqref{Ineq:Hardy} and the gaussian Poincar\'e inequality~\eqref{Ineq:Poincare}. Our purpose is to show that the results of Sections~\ref{Sec:Hardy} and \ref{Sec:Poincaregaussian} can be adapted to this more general family of inequalities.

Let us take $\alpha<0.$ If we expand the square $\left|\nabla(u\,h_{\alpha/2})\right|^2$, an integration by parts gives
\begin{multline*}
0\leq\int_{\R^d}\left|\nabla(u\;h_{\alpha/2})\right|^2dx\\
=\int_{\R^d}|\nabla u|^2\,d\mu_\alpha+\alpha\,(2-\,\alpha)\int_{\R^d}\frac{|x|^2}{(1+|x|^2)^2}\,u^2\,d\mu_\alpha-\,\alpha\,d\int_{\R^d}u^2\,d\mu_{\alpha-1}\;,
\end{multline*}
that is
\[\label{HardPon1}
\int_{\R^d}|\nabla u|^2\,d\mu_\alpha-\,\alpha\,d\int_{\R^d}u^2\,d\mu_{\alpha-1}\geq\alpha\,(\alpha-2)\int_{\R^d}\frac{|x|^2}{(1+|x|^2)^2}\,u^2\,d\mu_\alpha\;.
\]
Exactly as in the Gaussian case, we can get the analogue of \eqref{Ineq:Mouhot}. By the Hardy-Poincar\'e inequality \eqref{Eqn:Hardy-Poincare}, if $\mu_{\alpha-1}(u)=0$, we can estimate the second term of the left-hand side by
\[
-\,\alpha\,d\int_{\R^d}u^2\,d\mu_{\alpha-1}\leq-\,\alpha\,d\,\Lambda_{\alpha,d}^{-1}\int_{\R^d}|\nabla u|^2\,d\mu_\alpha\;.
\]
Hence for all $u\in H^1(\R^d,d\mu_\alpha)$ such that $\;\mu_{\alpha-1}(u)=0$ we find
\begin{equation}
\int_{\R^d}\frac{|x|^2}{(1+|x|^2)^2}\,u^2\,d\mu_\alpha\leq\frac{1-\,\alpha\,d\,\Lambda_{\alpha,d}^{-1}}{\alpha\,(\alpha-2)}\int_{\R^d}|\nabla u|^2\,d\mu_\alpha\label{IMHP}
\end{equation}
which is an \emph{improved Hardy-Poincar\'e} inequality. Of course all these inequalities are valid if $\R^d$ is replaced by any open set $\Omega$ and the space $H^1(\R^d,d\mu_\alpha)$ by $H_0^1(\Omega,d\mu_\alpha)$.
Next, for any open set $\Omega$ and any $u\in H_0^1(\Omega,d\mu_\alpha)$, we define the functional
\[
\mathsf I_{\Omega} [u]:=\int_{\Omega}|\nabla u|^2\,d\mu_\alpha+\alpha\,(2-\,\alpha)\int_{\Omega}\frac{|x|^2}{(1+|x|^2)^2}\,u^2\,d\mu_\alpha-\,\alpha\,d\int_{\Omega}u^2\,d\mu_{\alpha-1}
\]
and we know that $\mathsf I_{\Omega}[u]\geq0$ for any $u\in H_0^1(\Omega,d\mu_\alpha)$.

\subsection{A scheme for improving Hardy-Poincar\'e inequalities}\label{Sec:scheme}

To get a full asymptotic expansion, the strategy is similar to the one used in Theorems~\ref{Thm:Filippas-Tertikas},~\ref{Thm:ImprovedPoincare} and~\ref{Thm:GaussianDeux}, but various cases have to be distinguished depending on the dimension.

\par\noindent\emph{Step 1.~Expansion of the square.\/} Let $g$ be any smooth radial function on $\R^d$. For any $u\in H^1( \R^d,d\mu_\alpha) $, if we expand the square $\left|\nabla u+g(r) u\;x\right|^2$ and integrate by parts with respect to the measure $d\mu_\alpha$, we find
\begin{align*}
0 & \leq\int_{\R^d}\left|\nabla u+g(r)\,u\,x\right|^2\,d\mu_\alpha\\
&=\int_{\R^d}|\nabla u|^2\,d\mu_\alpha+\int_{\R^d} \Big[-r\,g'-\frac{(2\alpha+d)\,r^2+d}{1+r^2}\,g+r^2\,g^2\Big]\,u^2\,d\mu_\alpha\;.\nonumber
\end{align*}
Define now a function $h(r)$ by
\[
h(r)=(1+r^2)\,g(r)-\,\alpha\;.
\]
We find that
\begin{equation}
\mathsf I_{\R^d}[u]\geq\int_{\R^d}f(r)\,u^2\,\d\mu_{\alpha-2}
\label{HardPon4}
\end{equation}
where
\begin{equation}
f(r):=(1+r^2)\,r\,h'+\left[(d-2)\,r^2+d\right]h-r^2\,h^2\,.\label{f}
\end{equation}

The nontrivial positive global solutions to the equation
\begin{equation}
(1+r^2)\,r\,h'+\left[(d-2)\,r^2+d\right]h-r^2\,h^2=0\label{eqdiff}
\end{equation}
are given for $d\ge3$ by
\[
h(r)=(d-2)\,\frac{1+r^2}{r^2+C(d-2)r^d}\sim\frac{1}{C\,r^{d-2}}\quad\mbox{as}\quad r\to+\infty
\]
where $C$ is an arbitrary positive constant, while the positive solutions when $d=2$ are given in a neighborhood of $r=0_+$ by
\[
h(r)=\frac{1+r^2}{r^2\(C-\log r\)}
\]
for some $C\in\R$.

\par\noindent\emph{Step 2.~Optimal behavior at zeroth order in the asymptotic expansion.\/} Our aim is now to maximize $f(r)/r^2$ as $r\to+\infty$. To do that, assume that
\[
\frac{f(r)}{r^2}-d\,\frac h{r^2}=\frac{1+r^2}r\,h'+(d-2)\,h-h^2
\]
has a limit $\frac\ell4\,(d-2)^2$ if $d\ge3$ and $\ell$ if $d=2$.

Assume first that $d\ge 3$. With $h(r)=(d-2)\,\gammaH(s)$ and $s=\frac{d-2}2\log(1+r^2)\to+\infty$, we find that
\[
\gammaH'(s)+\gammaH(s)-\gammaH^2(s)\sim\frac\ell4
\]
and get a contradiction if $\ell>1$, by the same arguments as in Theorems~\ref{Thm:ImprovedPoincare}. As a consequence, $\limsup_{r\to+\infty}h(r)\le(d-2)/2$ and $f(r)\sim\frac 14\,(d-2)^2\,\ell\,r^2$ with $\ell\le1$ as $r\to+\infty$. Finally, it  is straightforward to check that if $f(r)/r^2$ has a limit larger than $(d-2)^2/4$ as $r\to+\infty$, then $h(r)\sim C\,r^2$ up to a positive constant $C$ and we also get a contradiction.

If $d=2$, we can work as in Theorem~\ref{Thm:GaussianDeux}. If $\ell>0$ and also get a contradiction using $h(r)=\gammaH(s)$ and $s=\frac12\log(1+r^2)\to+\infty$. After some elementary considerations, this shows that as $r\to+\infty$, the limit of $f(r)/r^2$ is non-positive if it exists.

\par\noindent\emph{Step 3.~Induction.\/} Assume temporarily that for some functions $\HH_d$, $\delta_d$ and $X$ to be determined, we have
\begin{align}
&X_0(t)=t\;,\quad X_{k+1}=X\circ X_k\;\nonumber,\\
&Y_0(t)=1\;,\quad Y_{k+1}=\(\delta_d\circ X_k\)^2,\label{XYZWIHP}\\
&Z_{0}(t)=1\text{ for }d\geq3\text{ and }Z_{0}(t)=0\text{ for }d=2\;,\quad Z_{k+1}=\HH_d\circ X_{k}\text{ for }k\geq0,\nonumber\\
&W_{k}(t)=Z_k(t)\mbox{ \thinspace if } k=0,1 \text{ and } W_k(t)={\displaystyle\prod_{j=1}^{k-1}}Y_j(t)\,Z_k(t)\quad\mbox{if}\quad k\ge 2\;.\nonumber
\end{align}
The functions $\gamma_d$ and $\delta_d$ are determined as follows. With $t=1/\log r$ if $d=2$, $t=r^{2-d}$ if $d\ge3$ and $h(r)=h_0(t)$, with $c_2=1$ and $c_d=(d-2)^2$ if $d\ge3$, we may write
\begin{equation}
\frac{f(r)}{c_d\,r^2}=\mathcal F(t,h_0(t),h_0'(t))\label{0it}
\end{equation}
for some function $\mathcal F$. The above choice of $t$ is justified by the behavior for $r\rightarrow\infty$ of the solutions $h$ to equation \eqref{eqdiff}. Then we identify a function $\gamma_d$ such that
\begin{enumerate}
\item[(i)] for some constant $\beta\in\R$, $\mathcal F(t,h_0(t),h_0'(t))=\beta\,\HH_d(t)+o(\HH_d(t))$ as $t\to0$,
\item[(ii)] if $\beta$ takes its largest possible value, then we look for some function $h_1$ and $s=X(t)$ such that
\begin{equation}
\mathcal F(t,h_0(t),h_0'(t))-\beta\,\HH_d(t)=\delta_d(t)^2\,\mathcal F(s,h_1(s),h_1'(s))\label{1it}\;.
\end{equation}
\end{enumerate}
The functions $X\,,\delta_{d}$ will be chosen in order to satisfy a relation of the type
\begin{equation}
\mathcal{A}(t,X(t),X^{\prime}(t),\delta_{d}(t))=\mathcal{B}(t,X(t),\delta_{d}(t),\delta^{\prime}_{d}(t))=\delta_{d}^{2}(t)\label{abdelt}
\end{equation}
where $\mathcal{A}\,,\mathcal{B}$ are suitable functions. Then the analogue of Theorems~\ref{Thm:Filippas-Tertikas},~\ref{Thm:ImprovedPoincare} and~\ref{Thm:GaussianDeux} holds. The main difficulty is to build the functions $\HH_d$ and $X$. A restriction comes from the requirement that some interval is stable under the action of $X$. This program can be completed in dimension $d=2$, $3$ and $4$, and also in dimension higher than $4$, in exterior domains.

\subsection{The case $d=2$}

If $h(r)=h_0(t)$ with $t=1/\log r$, we get that
\begin{equation}
\frac{f(r)}{r^2}=-\,(1+e^{-\frac 2t})\,t^2\,h_0'(t)+2\,e^{-\frac 2t}\,h_0(t)-h_0^2(t)\;\label{cas2}.
\end{equation}
Implicitly define the function $h_1$ such that
\[
h_0(t):=-\beta\,t+\delta_2(t)\,h_1(s)
\]
where $\beta\in (0,+\infty)$ and $\delta_2(t) $, $s=X(t)$ are two functions to be determined, and
\begin{equation}
\HH_2(t):=(1+e^{-\frac 2t})\,t^2-2\,e^{-\frac 2t}\,t-\beta\,t^2\;.\label{gamma2}
\end{equation}
Replacing $h_{0}(t)$ in \eqref{cas2} we find
\begin{multline*}
\frac{f(r)}{r^2}-\beta\,\HH_2(t)\\
=-\,(1+e^{-\frac 2t})\,t^2\,\delta_2(t)\,X'(t)\,h_1'(s)\hspace*{5cm}\\
+\left[2\(e^{-\frac 2t}-\beta\,t\)\delta_2(t)-\,(1+e^{-\frac 2t})\,t^2\,\delta_2'(t)\right] h_1(s)-\delta_2^2(t)\,h_1^2(s)\;.\nonumber
\end{multline*}
We can then write
\[
\frac{f(r)}{r^2}-\beta\,\HH_2(t)=-\,A(t)\,(1+e^{-\frac 2s})\,s^2\,h_1'(s)+2\,e^{-\frac 2{s}}\,B(t)\,h_1(s)-C(t)\,h_1^2(s)
\]
where we have set
\[
\mathcal{A}:=\dfrac{(1+e^{-\frac2{t}})}{(1+e^{-\frac 2X})}\,\dfrac{t^2\,\delta_2\,X'}{X^2}\;,\quad \mathcal{B}:=\dfrac{2\,(e^{-\frac 2t}-\beta\,t)\,\delta_2-(1+e^{-\frac 2t})\,t^2\,\delta_2'}{2\,e^{-\frac 2{X}}}
\]
and $\mathcal{C}:=\delta_2^2$. We look for functions $X$ and $\delta_2$ such that $\mathcal{A}=\mathcal{B}={C}$, \emph{i.e.} satisfying equations \eqref{abdelt}. This amounts to
\be{deltaduedim}
\delta_2=\dfrac{(1+e^-{\frac 2t})}{(1+e^{-\frac 2X})}\,\dfrac{t^2\,X'}{X^2}
\ee
and
\[
\frac{X'}{(1+e^\frac 2X)\,X^2}=-\frac{\delta_2'}{2\,\delta_2}+\frac{1-\beta\,t\,e^\frac 2t}{t^2\,(1+e^\frac 2t)}\;.\label{casoduedim2}
\]
By taking the logarithmic derivative of equation \eqref{deltaduedim}, we obtain
\begin{equation*}
\frac{\delta_2'}{\delta_2}=\frac 2{t^2\,(1+e^\frac 2t)}-\frac{2\,X'}{(1+e^\frac 2X)\,X^2}+\frac 2t-2\,\frac{X'}X+\frac{X''}{X'}
\end{equation*}
so that $X$ solves the ordinary differential equation
\[
\frac{X''}{X'}-2\,\frac{X'}X=-\frac{2\,(\beta+1)}t+\frac{2\,\beta}{t\,(1+e^\frac 2t)}
\]
which leads to
\[
\log \frac{X'}{X^2}=-2\,(\beta+1)\,\log t+2\,\beta\int_1^t\frac{ds}{s\,(1+e^\frac 2s)}+C_1
\]
for some constant $C_1\in\R$. The solution with initial condition $X(0)=0$ can be written as
\[
X(t)=\left[C_0-e^{C_1}\int_1^ts^{-2\,(\beta+1)}\,\exp\(2\,\beta\int_1^s\frac{d\sigma}{\sigma\,(1+e^\frac 2 \sigma)}\)ds\right]^{-1}\,
\]
for some positive constant $C_0$. We also notice that for $t>0$ small enough and $\beta\in(0,1-1/e^2]$, the function $\gamma_2$ defined by \eqref{gamma2} is positive.  Notice that $t=1/\log r$ ranges in $(0,+\infty)$ as $r$ ranges in $(1,+\infty)$, and so we can take any $u$ supported outside the unit ball without further precautions. We remark that \emph{if} $\beta>1/2$, then $X(t)\sim t^{2\,\beta-1}$ as $t\to 0_+$, so that $X$ satisfies the initial condition $X(0)=0$. Besides, for a fixed $t^{\star}>1$, the constants $C_0$ and $C_1$ are chosen such that $X(t^{\star})=t^{\star}$, in order that the interval $[0,t^{\star}]$ is stable under the action of $X$.
\begin{proposition}\label{Prop:Dim2HP} Assume that $d=2$ and $\beta\in(1/2,1-1/e^2]$. With the above notations and $\{W_k\}_k$ defined by \eqref{XYZWIHP}, for any $u\in H^1(\R^2,d\mu_\alpha) $, compactly supported outside the ball of radius $R=e^{1/t^{\star}}$ with $t^{\star}>1$, if $t=1/\log|x|$, then we have
\[
\mathsf I_{\R^2}[u]\geq\beta\int_{\R^2}\(\;\sum_{k=0}^{\infty}W_{k}(t)\)\,|x|^2\,u^2\,d\mu_{\alpha-2}\;.
\]
\end{proposition}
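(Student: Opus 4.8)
The plan is to run the three-step scheme of Theorems~\ref{Thm:ImprovedPoincare} and~\ref{Thm:GaussianDeux}, now for the functional $\mathsf I_{\R^2}$. Step~1 is already available: the expansion of the square of Section~\ref{Sec:scheme} gives \eqref{HardPon4}, which at $d=2$ reads
\[
\mathsf I_{\R^2}[u]\ge\int_{\R^2}f(r)\,u^2\,d\mu_{\alpha-2}\;,
\]
valid for every admissible radial $g$ — equivalently every $h$, equivalently every $h_0$ — with $f$ given by \eqref{f}. Hence it suffices to produce one admissible $h_0$ for which $f(r)/r^2=\beta\sum_{k=0}^\infty W_k(t)$, $t=1/\log|x|$, on the range of $t$ met by the support of $u$. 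Since $\mathrm{supp}\,u\subset\{|x|>R=e^{1/t^\star}\}$ we have $\log|x|>1/t^\star$, i.e. $t\in(0,t^\star)$, which is exactly the interval left stable by $X$; this confines the whole iteration to the set where the objects of \eqref{XYZWIHP} are defined.

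Next I would set $t=1/\log r$, write $h(r)=h_0(t)$ to obtain \eqref{cas2}, and insert the ansatz $h_0(t)=-\beta\,t+\delta_2(t)\,h_1(s)$, $s=X(t)$, together with $\gamma_2$ from \eqref{gamma2}. The crux is the self-similarity identity \eqref{1it}: after subtracting $\beta\,\gamma_2(t)$ and collecting the coefficients $\mathcal A,\mathcal B,\mathcal C$ of $h_1',h_1,h_1^2$, one imposes $\mathcal A=\mathcal B=\mathcal C=\delta_2^2$, i.e. the system \eqref{abdelt}. As carried out just before the statement, this determines $\delta_2$ through \eqref{deltaduedim} and forces $X$ to solve the second-order ODE derived there, the solution normalised by $X(0)=0$ and $X(t^\star)=t^\star$ being the one retained. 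With these choices
\[
\frac{f(r)/r^2-\beta\,\gamma_2(t)}{\delta_2^2(t)}=-\,(1+e^{-2/s})\,s^2\,h_1'(s)+2\,e^{-2/s}\,h_1(s)-h_1^2(s)\;,
\]
which is again \eqref{cas2}, in the variable $s$ with $h_1$ in place of $h_0$. Verifying that the construction of $X,\delta_2$ really yields this clean self-similar form is the main algebraic task; everything after it is bookkeeping.

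Because the transformed problem coincides with the original one, I would close the recursion by the self-consistent choice $h_1=h_0$. Setting $R(t):=f(r)/r^2$, the identity becomes
\[
R(t)=\beta\,\gamma_2(t)+\delta_2^2(t)\,R\big(X(t)\big)\;,
\]
and, reading $Z_{k+1}=\gamma_2\circ X_k$ and $Y_{k+1}=(\delta_2\circ X_k)^2$ off \eqref{XYZWIHP}, it telescopes: for every $N\ge1$,
\[
R(t)=\beta\sum_{k=1}^N W_k(t)+\prod_{j=1}^N Y_j(t)\,R\big(X_N(t)\big)\;,
\]
using $W_1=Z_1$ and $W_k=\big(\prod_{j=1}^{k-1}Y_j\big)Z_k$ for $k\ge2$. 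Here the restrictions on $\beta$ enter: $\beta\le1-1/e^2$ makes $\gamma_2\ge0$ near $0$, hence all $W_k\ge0$ and the remainder nonnegative, so that $R(t)\ge\beta\sum_{k=1}^N W_k(t)$ for every $N$; while $\beta>1/2$ gives $X(t)\sim t^{2\beta-1}$, so the iterates $X_N(t)$ decrease to $0$ and the remainder $\prod_{j=1}^N Y_j(t)\,R(X_N(t))$ vanishes as $N\to\infty$. Passing to the limit yields $R(t)=\beta\sum_{k=0}^\infty W_k(t)$, with $W_0=0$, which inserted in the first display is exactly the claim.

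The step I expect to be delicate is this passage to the limit: one must control the infinite product $\prod_j Y_j(t)=\prod_j\delta_2\big(X_{j-1}(t)\big)^2$ together with the decay of $R$ along the orbit $X_N(t)\to0$, uniformly on compact subsets of $(0,t^\star)$, and check that the choice of the constants $C_0,C_1$ indeed keeps $[0,t^\star]$ stable under $X$ so that the iteration remains admissible. The self-similarity identity \eqref{1it}, though purely computational, is the other point where an error would be fatal; once it is secured, the argument is rigidly parallel to the proofs of Theorems~\ref{Thm:ImprovedPoincare} and~\ref{Thm:GaussianDeux}.
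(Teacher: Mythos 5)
Your proposal is correct and follows essentially the same route as the paper: the paper's own ``proof'' of Proposition~\ref{Prop:Dim2HP} is precisely the construction of $\gamma_2$, $\delta_2$ and $X$ carried out in the preceding paragraphs (expansion of the square, the ansatz $h_0(t)=-\beta\,t+\delta_2(t)\,h_1(X(t))$, the system \eqref{abdelt}, and the stability of $[0,t^\star]$), combined with the telescoping recursion $R(t)=\beta\,\HH_2(t)+\delta_2^2(t)\,R(X(t))$ exactly as you write it. The only difference is one of emphasis: you additionally argue that the remainder $\prod_{j=1}^N Y_j\,R(X_N(t))$ vanishes as $N\to\infty$, whereas the paper contents itself with the finite-$N$ identity and the nonnegativity of the $W_k$ (guaranteed by $\beta\le 1-1/e^2$, which in fact makes $\gamma_2\ge0$ on all of $(0,+\infty)$, not merely near $0$); both readings lead to the stated inequality.
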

At this point, optimality is clearly an open question because of the $\beta$ factor.

\subsection{The case $d\ge 3$}
Assume that
\begin{equation*}
\HH_d(t):=t^\frac d{d-2}\,\(\tfrac2{d-2}-\tfrac14\,t^{\frac{d-4}{d-2}}\)\mbox{ if }d\ge4\;.\label{gamma4}
\end{equation*}
Since the function $\gamma_{d}$ for $d=3$ is positive only for $r\leq8$ and we want an asymptotic expansion for $r\rightarrow\infty$, we need a different choice of $\gamma_{3}$. More precisely, we set
\begin{equation*}
\HH_3(t):=\frac14\,\sqrt{t}\,\(10\,t^2-\sqrt{t}+2\).\label{gamma3}
\end{equation*}
The function $\gamma_{d}$ for $d\geq3$ will be the ``remaining term'' in \eqref{1it} coming out by plugging the two different expressions of $h_{0}(t)$ for $d=3$ and $d\geq4$ in \eqref{0it}, as we will see later. Moreover, we
let for all $d\geq3$
\be{functions}
t=\frac1{r^{d-2}}\;,\quad X(t)=\int_0^te^{-\frac12\nu_d(s)}\,ds\quad\mbox{and}\quad\delta_d(t)=\frac tX\,\frac{1+t^\frac 2{d-2}}{1+X^\frac 2{d-2}}\,X'
\ee
with
\[
\nu_3(s):=\int_0^s\frac{d\sigma}{\sqrt\sigma\,(1+\sigma^2)}\quad\mbox{and}\quad\nu_d(s):=\frac12\int_0^s\frac{d\sigma}{1+\sigma^\frac 2{d-2}}\mbox{ if }d\ge4\;.
\]
Notice that
\[
\nu_3(s)=\frac1{\sqrt2}\left[ \arctan\(1\!+\!\sqrt{2s}\)-\arctan\(1\!-\!\sqrt{2s}\) +\frac12\log\(\frac{1+\sqrt{2s}+s}{1-\sqrt{2s}+s}\) \right]\,.
\]
By definition of $X$, $X(t)\leq t$ for any $t\geq0$. The sequence $\left\{X_{k}\right\}_{k}$ is therefore decreasing, and in particular $X_{k}(t)\leq t$ for all $k$. Now we look for the sets where the functions $W_{k}$ are nonnegative. First, we notice that $\HH_d(t)$ is always positive if $d=3$, $4$.
\begin{theorem}\label{Theorem5} Let $d=3$ or $4$, and assume that $\{W_k\}_k$ is defined by \eqref{XYZWIHP}. For any function $u\in H^1(\R^{d},d\mu_\alpha) $ we have
\[
\mathsf I_{\R^{d}}[u]\geq\frac14\,(d-2)^2\int_{\R^{d}}\(\;\sum_{k=0}^{\infty}W_{k}(t)\)|x|^2\,u^2\,d\mu_{\alpha-2}\;.
\]
Moreover, the expansion is asymptotically optimal, in the sense that at any order $N\geq 0$, if we consider an improved inequality of the form
\[
\mathsf I_{\R^{d}}[u]\geq\frac14\,(d-2)^2\int_{\R^{d}}\(\;\sum_{k=0}^{N}W_{k}(t)+\prod_{j=0}^{N}Y_{j}(t)\,R_N(t)\)\,|x|^2u^2\,d\mu_{\alpha-2}
\]
and if $R_N(t)$ converges as $t\to0$\,to some limit $\ell_N\in [0,\infty)$, then $\ell_N\leq 1$.
\end{theorem}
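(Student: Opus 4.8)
The plan is to run, line for line, the three-step architecture already used for Theorem~\ref{Thm:ImprovedPoincare} and abstracted in Section~\ref{Sec:scheme}, specializing the generic functions $\gamma_d$, $X$, $\delta_d$ to the explicit choices recorded in \eqref{functions}. Step~1 (expansion of the square) is already in place: from the computation of Section~\ref{Sec:scheme}, expanding $\int_{\R^d}|\nabla u+g(r)\,u\,x|^2\,d\mu_\alpha$ and putting $h(r)=(1+r^2)\,g(r)-\alpha$ gives $\mathsf I_{\R^d}[u]\ge\int_{\R^d}f(r)\,u^2\,d\mu_{\alpha-2}$ with $f$ as in \eqref{f}, so the whole problem reduces to maximizing the scalar quantity $f(r)/[(d-2)^2 r^2]$ as $r\to+\infty$. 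Step~2 (zeroth order) is likewise supplied by the scheme: with $h(r)=(d-2)\,\gamma_d$-profile substitution $h=(d-2)H(s)$, $s=\tfrac{d-2}2\log(1+r^2)$, one obtains the Riccati asymptotics $H'+H-H^2\sim\ell/4$, and the usual monotonicity-of-$(H-\tfrac12)$ contradiction forces $\ell\le1$; hence $W_0=Z_0=1$ is the optimal leading term, i.e. $\ell_0\le1$.

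The substance is Step~3 (induction). I would set $t=r^{2-d}$ and $h(r)=h_0(t)$, so that \eqref{0it} reads $f(r)/[(d-2)^2 r^2]=\mathcal F(t,h_0(t),h_0'(t))$, and then verify that the explicit $\gamma_3$, $\gamma_4$, $X$ and $\delta_d$ of \eqref{functions} satisfy requirements (i)--(ii) of the scheme: after subtracting the leading contribution $\beta\,\gamma_d(t)$ with $\beta$ maximal and substituting $h_0(t)=(\text{zeroth-order profile})+\delta_d(t)\,h_1(s)$ with $s=X(t)$, the reduced functional reproduces the \emph{self-similar} identity \eqref{1it}, equivalently the system \eqref{abdelt} for $X$ and $\delta_d$. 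Granting this one step, the self-similar choice $h_1=h_0$ (hence $h_{k+1}=h_k$) telescopes $\mathcal F$ into $\sum_{k=0}^N W_k(t)+\prod_{j=0}^N Y_j(t)\,R_N(t)$, exactly as in the displayed chain in the proof of Theorem~\ref{Thm:ImprovedPoincare}.

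To close the inequality for $d=3,4$ I would then invoke the positivity of $\gamma_d$ on the whole range: since $X(t)\le t$ forces the iterates $X_k(t)$ to decrease to $0$, and since $\gamma_d$ is positive for $d=3,4$, every $W_k$ is nonnegative on all of $\R^d$, so each remainder $\prod_{j=0}^N Y_j\,R_N$ is nonnegative and letting $N\to\infty$ yields $f(r)\ge\tfrac14(d-2)^2 r^2\sum_{k=0}^\infty W_k(t)$, i.e. the stated inequality. Asymptotic optimality at order $N$ follows by iterating the Step~2 argument on $R_N$: if $R_N\to\ell_N>1$ as $t\to0$, the associated $h_{N+1}$ satisfies a Riccati inequality that simultaneously forces $h_{N+1}-\tfrac12$ to converge and to be monotone, a contradiction, whence $\ell_N\le1$.

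The main obstacle is the bookkeeping of Step~3 with these rather unwieldy explicit functions, and above all the choice of $\gamma_3$. The candidate $\gamma_3$ produced mechanically by plugging the $d=3$ profile into \eqref{0it} is only positive for $r\le 8$, so the modified $\gamma_3(t)=\tfrac14\sqrt t\,(10\,t^2-\sqrt t+2)$ must be used, and one has to check both that it is \emph{globally} positive and that it still fits the functional identity \eqref{1it} with the same integral-defined $X(t)=\int_0^t e^{-\nu_d(s)/2}\,ds$ and the resulting $\delta_d$. Confirming \eqref{abdelt} for this $X$ and $\delta_d$, and verifying stability of the relevant interval under $X$ together with the global nonnegativity of all $W_k$, is where the real work lies; it is precisely this global positivity (unavailable for $d=2$ and for $d>4$) that allows the statement to hold for every $u\in H^1(\R^d,d\mu_\alpha)$ without the support restriction imposed in Proposition~\ref{Prop:Dim2HP}.
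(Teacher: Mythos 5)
Your proposal follows the same three-step architecture the paper itself uses for Theorem~\ref{Theorem5} --- expansion of the square yielding $f$ as in \eqref{f}, the zeroth-order Riccati contradiction giving $\ell_0\le1$, and the self-similar induction via $h_0(t)=(\text{leading profile})+\delta_d(t)\,h_1(X(t))$ with the system \eqref{abdelt}, together with the global positivity of $\gamma_3,\gamma_4$ and the decrease of the iterates $X_k$ to justify summing all the $W_k$ over the whole of $\R^d$. The one computation you defer, namely checking that the explicit $X$ and $\delta_d$ of \eqref{functions} solve \eqref{abdelt}, is precisely the step the paper also leaves as ``not difficult to show'', so the two arguments coincide in substance.
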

\begin{proof}
Let us set $h(r)=(d-2)\,h_0(t)$ with $t=r^{2-d}$. Then
\begin{equation}
\frac{f(r)}{(d-2)^2\,r^2}:=-\(1+t^\frac 2{d-2}\)t\,h_0'(t)+\(1+\tfrac{d}{d-2}\,t^\frac 2{d-2}\) h_0(t)-h_0^2(t) \label{HardPon5}\;.
\end{equation}
Next consider the function $h_1$ implicitly defined by
\[
h_0(t)=\left\{\begin{array}[c]{ll}
\frac{t}4+\delta_d(t)\,h_1(s)\quad&\text{if }d=4\\[4pt]
\frac14\,\sqrt{t}+\delta_d(t)\,h_1(s)\quad&\text{if }d=3
\end{array}
\right.
\]
where $\delta_d=\delta_d(t) $ and $s=X(t)$ are two positive functions to be determined. Replacing in \eqref{HardPon5}, we find that
\begin{multline*}
\frac{f(r)}{(d-2)^2\,r^2}-\frac14\,\HH_d(t)\\
\begin{array}{ll}=&\displaystyle-\,\frac tX\,\frac{1+t^\frac 2{d-2}}{1+X^\frac 2{d-2}}\,\delta_d\,X'\,(1+s^\frac 2{d-2})\,s\,h_1'(s)\smallskip\\
&\displaystyle+\,\tfrac{(2d-4+2d\,t^\frac 2{d-2}-(d-2)\,t)\,\delta_d-2(d-2)\,(1+t^\frac 2{d-2})\,t\,\delta_d'}{2(d-2+d\;X^\frac 2{d-2})}\(1+\tfrac{d}{d-2}\,s^\frac 2{d-2}\)h_1(s)\\
&\displaystyle-\,\delta_d^2\,h_1^2(s)\;.\end{array}
\end{multline*}
As a consequence, we get an expression that is similar to \eqref{HardPon5}, namely
\begin{align*}
& \frac1{\delta_d^2(t) }\left[\frac{f(r)}{(d-2)^2\,r^2}-\frac14\,\HH_d(t)\right]\label{prova}\\
&=-\(1+s^{\frac2{d-2}}\) s\,h_1^{\prime}(s) +\(1+\tfrac{d}{d-2}\,s^\frac 2{d-2}\)\,h_1(s) -h_1^2(s) \nonumber
\end{align*}
by imposing that $X$ and $\delta_d$ satisfy equations \eqref{abdelt}, where
\begin{align*}
&\mathcal{A}:=\frac tX\,\frac{1+t^\frac 2{d-2}}{1+X^\frac 2{d-2}}\,\delta_d\,X'\\&\mathcal{B}:=\displaystyle\,\frac{(2d-4+2d\,t^\frac 2{d-2}-(d-2)\,t)\,\delta_d-2(d-2)\,(1+t^\frac 2{d-2})\,t\,\delta_d'}{2(d-2+d\;X^\frac 2{d-2})}.
\end{align*}
It is not difficult to show that the functions $X$, $\delta_d$ in \eqref{functions} are solutions to \eqref{abdelt}, satisfying the conditions $X(0)=0$ and $\lim_{t\to 0_+}\delta_d(t)=1$.

At this point, we can iterate all the arguments. Indeed, as in the proof of Theorem~\ref{Thm:ImprovedPoincare}, if we set
\[
R_0(t):=\frac{f(r)}{(d-2)^2\,r^2}
\]
and define
\[
R_{k+1}:=R_k\circ X_{k+1}\quad\mbox{for}\quad k\geq0
\]
by \eqref{XYZWIHP}, for any $N\geq 1$ we obtain
\begin{align*}
R_0(t) &=\frac14Z_1(t)+Y_1(t)\,R_1(t)\\
&=\ldots=\frac14\sum_{k=1}^NW_k(t)+{\displaystyle\prod_{j=1}^N}Y_j(t)\,R_N(t)\;.
\end{align*}
\end{proof}

\begin{remark} If $d=3$, $4$, we can combine the results of Theorem~\ref{Theorem5} with inequality \eqref{Eqn:Hardy-Poincare} to get a further improvement of \eqref{IMHP}, that reads as
\begin{equation*}
\int_{\R^d}|\nabla u|^2\,d\mu_\alpha\geq\frac1{1-\,\alpha\,d\,\Lambda_{\alpha,d}^{-1}}\int_{\R^d}\left[\alpha\,(\alpha-2)+\tfrac{(d-2)^2}4\sum_{k=0}^{\infty}W_{k}(t)\right]|x|^2\,u^2\,d\mu_{\alpha-2}
\end{equation*}
for any $u\in H^1(\R^{d},d\mu_\alpha)$ such that $\mu_{\alpha-1}(u)=0$.\end{remark}

When $d\ge5$, if \hbox{$\zeta:=(8/\!(d-2))^\frac{d-2}{d-4}$}, we remark that for $t\leq\zeta$ we get $Z_1(t)=\HH_d(t)\geq0$ and for all $k\geq1$ it follows that $X_{k}(t)\leq X_{k}(\zeta)\leq\zeta$ thus $Z_{k+1}(t)=\HH_d(X_{k}(t))\geq0$. Therefore we define $R:=(1/\zeta)^{1/d-2}$, so that for any $r\geq R$ we know that $Z_{k}(t)\geq0$ for all $k\in\N$. Hence, for $d\ge 5$, only external domains should be considered.

\subsection{External domains with $d\ge5$}

As in Section~\ref{Sec:Hardy}, consider the Kelvin transformation given by \eqref{Kelvintrans}. Let $\Omega$ be an open set of $\mathbb{R}^{d}$ containing the origin and for any function $u\in C_0^1\(\Omega\)$ we consider $v(y)=|y|^{2-d}\,u(x)$ with $y=x/|x|^2$. It follows that $v\in C_0^1\(\Omega^{K}\) $. By standard calculations, we find
\[
|\nabla v|^2=|y|^{-2d}\,|\nabla u(x)|^2+(d-2)^2\,|y|^{2-2d}\,u^2(x)+(d-2)\,|y|^{-2d}\(y\cdot\nabla u^2(x)\)
\]
with $y=x/|x|^2$. Integrating and performing an integration by parts to the last term in the expression of $|\nabla v|^2$, we get that
\[
\int_{\Omega^{K}}|\nabla v|^2\,d\mu_\alpha=\int_{\Omega}\frac{|\nabla u|^2}{|x|^{2\alpha}}\;d\mu_\alpha+ 2\,\alpha\,(d-2)\int_{\Omega}\frac{u^2}{|x|^{2\(\alpha +1\)}}\;d\mu_{\alpha-1}\label{functI1}
\]
so that, if $\int_{\Omega}|x|^{-\(d+2\alpha\)}u\,d\mu_{\alpha-1}=0$, by applying \eqref{Eqn:Hardy-Poincare} to $v$ we get
\[
\int_{\Omega}\frac{|\nabla u|^2}{|x|^{2\alpha}}\;d\mu_\alpha \geq \left[\Lambda_{\alpha,d}-2\,\alpha\,(d-2)\right]\int_{\Omega}\frac{u^2}{|x|^{2(\alpha+1)}}\;d\mu_{\alpha-1}\;,\label{functI3}
\]
which is a new weighted Hardy-Poincar\'e inequality. The constant is positive because $\alpha$ is negative and $d\ge 2$.

With the change of variables $y=x/|x|^2$ and $u$ given in terms of $v$ by \eqref{Kelvintrans}, we find that $\mathsf I_{\Omega^K}[v]=\mathsf J_{\Omega}[u]$ where
\begin{multline*}
\mathsf J_{\Omega}[u]:=\int_{\Omega}|\nabla u|^2\,|x|^{-2\alpha}\,d\mu_\alpha+\alpha\,(d-4)\int_{\Omega}\frac{u^2}{|x|^{2(\alpha+1)}}\,d\mu_{\alpha-1}\\+\alpha\,(2-\alpha) \int_{\Omega}\frac{u^2}{|x|^{2(\alpha+1)}}\,d\mu_{\alpha-2}
\end{multline*}
is nonnegative. This leads to the inequality
\begin{equation}
\int_{\Omega}|\nabla u|^2\,|x|^{-2\alpha}\,d\mu_\alpha\geq\alpha(\alpha-2)\int_{\Omega}\frac{u^2}{|x|^{2(\alpha+1)}}\,d\mu_{\alpha-2}\;.\label{HarPonsin}
\end{equation}
For dimension $d\geq5$ we have seen that the terms $W_{k}(t)$ in the asymptotic expansion are nonnegative out of the ball $B_{R}$, with $R=((d-2)/8)^{(d-2)/(d-4)}$. Since $B_{1/R}^{K}=\R^d\setminus B_{R}$, the Kelvin transformation allows then to add a whole asymptotic expansion as $|x|\rightarrow0$ at the right hand side of \eqref{HarPonsin}. Indeed, we have the following
\begin{corollary}\label{Corollary8}
If $d\ge5$ and $B_{1/R}$ denotes the ball of radius $1/R$, then we have
\[
\mathsf J_{B_{1/R}}[u]\geq\frac14\,(d-2)^2\int_{B_{1/R}}\(\;\sum_{k=0}^{\infty}W_{k}(|x|^{d-2})\)\frac{u^2}{|x|^{2(\alpha+1)}}\;d\mu_{\alpha-2}\label{d>4}
\]
for any \hbox{$u\in H_0^1(B_{1/R},d\mu_\alpha)$}, and the asymptotic expansion as $|x|\rightarrow0$ at the right hand side is optimal again, in the sense specified in Theorem 6.
\end{corollary}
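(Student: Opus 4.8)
The plan is to deduce the statement from the $d\ge5$ analogue of Theorem~\ref{Theorem5} posed on an exterior domain, via the Kelvin transform~\eqref{Kelvintrans}, exactly as Corollary~\ref{Cor:Kelvin} was obtained from Theorem~\ref{Thm:Filippas-Tertikas}. The starting point is the identity $\mathsf I_{\Omega^K}[v]=\mathsf J_{\Omega}[u]$ established just above, applied with $\Omega=B_{1/R}$, so that $\Omega^K=\R^d\setminus B_R$. Since the inversion $x\mapsto y=x/|x|^2$ maps $B_{1/R}$ bijectively onto $\R^d\setminus B_R$ and is an isometry between the relevant weighted spaces, it suffices to prove the improved inequality for $v$ on the exterior domain $\R^d\setminus B_R$ and then to transport it back to $u$ on $B_{1/R}$.

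First I would establish, for $d\ge5$, the inequality
\[
\mathsf I_{\Omega^K}[v]\ge\frac14\,(d-2)^2\int_{\Omega^K}\Big(\sum_{k=0}^\infty W_k\big(|y|^{2-d}\big)\Big)\,|y|^2\,v^2\,d\mu_{\alpha-2}.
\]
The expansion of the square and the induction of the proof of Theorem~\ref{Theorem5} are purely algebraic and carry over verbatim on $\Omega^K$: for every $N$ they produce an identity $R_0=(\text{partial sum of the }W_k)+\big(\prod_j Y_j\big)\,R_N$, with each $Y_j=(\delta_d\circ X_{j-1})^2\ge0$ and with $R_N\ge0$ for the self-similar choice $h_{N+1}=h_N$. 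The only place where $d\ge5$ departs from $d=3,4$ is the sign of the $W_k$, and this is precisely where the exterior restriction is decisive: for $|y|=r>R$ one has $t=r^{2-d}<R^{2-d}=\zeta$, and since $X_k(t)\le t\le\zeta$ (as noted after~\eqref{functions}) and $\gamma_d\ge0$ on $[0,\zeta]$, every $Z_{k+1}=\gamma_d\circ X_k$, hence every $W_k$, is nonnegative throughout $\Omega^K$. One may then drop the nonnegative remainder $\int_{\Omega^K}\big(\prod_jY_j\big)R_N\,|y|^2v^2\,d\mu_{\alpha-2}\ge0$ and let $N\to\infty$, the passage to the limit being justified by monotone convergence precisely because all $W_k\ge0$ on $\Omega^K$.

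Next I would change variables $y=x/|x|^2$ to rewrite this inequality in terms of $u$ on $B_{1/R}$. Using $|y|=1/|x|$, the relation $v(y)=|y|^{2-d}u(x)$ from~\eqref{Kelvintrans}, the Jacobian $dy=|x|^{-2d}\,dx$, and $(1+|y|^2)^{\alpha-2}=(1+|x|^2)^{\alpha-2}\,|x|^{2(2-\alpha)}$, a bookkeeping of the powers of $|x|$ collapses $|y|^2\,v^2(y)\,d\mu_{\alpha-2}(y)$ into $|x|^{-2(\alpha+1)}\,u^2(x)\,d\mu_{\alpha-2}(x)$; moreover $W_k(|y|^{2-d})=W_k(|x|^{d-2})$, which accounts for the argument $|x|^{d-2}$ in the statement. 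Combined with $\mathsf I_{\Omega^K}[v]=\mathsf J_{B_{1/R}}[u]$, this yields exactly the claimed inequality. For optimality, since the Kelvin transform is a bijective isometry carrying $r\to\infty$ — where the expansion of Theorem~\ref{Theorem5} is asymptotically optimal as $t\to0$ — to $|x|\to0$, the optimality statement transfers term by term, giving the asserted optimality of the expansion as $|x|\to0$.

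The main obstacle I anticipate is not the Kelvin algebra, which is routine, but making rigorous the uniform nonnegativity of all the $W_k$ on the whole of $\Omega^K$. This rests on the monotonicity $X_k(t)\le t$ together with the sign of $\gamma_d$ on $[0,\zeta]$, and it is exactly this uniform positivity on the exterior domain — unavailable on all of $\R^d$ when $d\ge5$ — that simultaneously legitimizes discarding the remainder term and licenses the monotone-convergence passage to the infinite series.
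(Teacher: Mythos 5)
Your proposal is correct and follows essentially the same route as the paper's own proof: pass to $v$ on the exterior domain $B_{1/R}^{\mathrm K}=\R^d\setminus B_R$ via the Kelvin transform, run the expansion-of-the-square and induction of Theorem~\ref{Theorem5} there (where $t=r^{2-d}\le\zeta$ guarantees $W_k\ge0$), and transport the resulting inequality and its optimality back to $u$ using $\mathsf I_{B_{1/R}^{\mathrm K}}[v]=\mathsf J_{B_{1/R}}[u]$. You merely spell out the change-of-variables bookkeeping and the monotone-convergence passage to the infinite series, which the paper leaves implicit.
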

\begin{proof}
If $u\in H_0^1(B_{1/R},d\mu_\alpha)$, let $v\in H_0^1(B_{1/R}^{K},d\mu_\alpha)$ given by \eqref{Kelvintrans}. Therefore we can use the same method in the proof of Theorem 6, up to replace $\R^{d}$ by $B_{1/R}^{K}$. Then we find
\[
\mathsf I_{B_{1/R}^{K}}[v]\geq\int_{B_{1/R}^{K}}f(r)\,v^{2}\,d\mu_{\alpha-2}
\]
with the same $f$ defined in \eqref{f}. Clearly the optimality arguments shown in the step 2 of section 4.2 still holds. At the end, we obtain
\[
\mathsf I_{B_{1/R}^{K}}[v]\geq\frac14\,(d-2)^2\int_{B_{1/R}^{K}}\(\;\sum_{k=0}^{\infty}W_{k}(t)\)|x|^2\,v^2\,d\mu_{\alpha-2}\;.
\]
and the Kelvin transformation \eqref{Kelvintrans} implies the desired result.
\end{proof}

\section{Concluding remarks and open questions}\label{Sec:Conclusion}

The Poincar\'e inequality (with gaussian weight) is a \emph{spectral gap} inequality and it is easy to obtain improved constants by imposing constraints on the set of functions. The orthogonality with respect to all Hermite polynomials of order less than $k$ will automatically increase the value of the corresponding Rayleigh quotient. This has been investigated for instance in \cite{0904} (also see references therein) in connection with other interpolation inequalities. A similar approach has also been developed for Hardy-Poincar\'e inequalities: see \cite{DT} in case of the measure $d\mu_\alpha$ introduced in Section~\ref{Sec:PoincareOther}. The links between Hardy-Poincar\'e inequalities and the gaussian Poincar\'e ineuquality do not stop here: indeed, if we use the scaling $u_{\lambda}(x)=\lambda^{\frac{d-2}2}\,u(\lambda\,x)$, where $\lambda=\sqrt{2\,|\alpha|}$, we get
\[
\mathsf I_{\R^{d}}[u_{\lambda}]\rightarrow\mathsf G[u]
\]
as $\alpha\rightarrow -\infty$ and the equation we have seen in the proof of Theorem~\ref{Theorem5}
\begin{equation}
(1+r^2)\,r\,h'+\left[(d-2)\,r^2+d\right]h-r^2\,h^2=0\label{eqth5}
\end{equation}
degenerates into the equation used in the proof of Theorem~\ref{Thm:ImprovedPoincare},
\begin{equation}
r\,h'+(d-2)\,h-h^2=0\;.\label{eqth3}
\end{equation}
To see this last property, it is enough to notice that if $h$ is a solution to \eqref{eqth5}, then for any $\lambda>0$ we have
\[
(1+\lambda^4\,r^2)\,\lambda^2\,r\,h'(\lambda^2\,r)+\left[(d-2)\,\lambda^4\,r^2+d\right]h(\lambda^2\,r)-\lambda^4\,r^2\,h^2(\lambda^2\,r)=0
\]
and by making the change of variable $s=\lambda\,r$, it is straightforward to get
\[
\(\frac1{\lambda^2}+s^2\)s\,\frac{d}{ds}[h(\lambda\,s)]+\left[(d-2)\,s^2+\frac{d}{\lambda^2}\right]\,h(\lambda\,s)-s^2\,h^2(\lambda\,s)=0
\]
hence, if $\lambda=\sqrt{2\,|\alpha|}$, assuming that $h(\lambda\,s)\rightarrow \widetilde{h}(s)$ as $\alpha\rightarrow-\infty$, \textit{at least formally} we obtain that $\widetilde{h}$ solves \eqref{eqth3}.

In this paper, we have given improvements on the potential (characterized by its asymptotic expansion as either $|x|\to 0$ or $|x|\to\infty$) without imposing additional conditions on the set of functions. However, as noticed in the introduction, by requiring $\bar u=0$, we get the improved inequality \eqref{Ineq:Mouhot}. In that case the measure is not the same for the $L^2$ term and for the Dirichlet energy. This raises the interesting question of combining both approaches which, as far as we know, is a completely open issue.

Improvements have been achieved as a series of positive terms that can be added to the weight in the $L^2$ norm controlled by the Dirichlet form, thus leaving the inequality as a comparison between two quadratic functionals. The optimal additional terms are obtained by iterating a map involving some logarithmic terms. As mentioned in the introduction, there are other improvements which amount to control $u^2\,\log u^2$ terms by the Dirichlet form: the logarithmic Sobolev inequality and the logarithmic Hardy inequality, for instance. Is it possible to relate these two approaches ?

The basic tool of our approach is a simple expansion of a square. However, by leaving the weight undetermined, we obtain a non-local integro-differential equation which allows to identify the best possible growth order by order, and build an induction scheme. Some care is however required when defining the class of potentials under consideration. \emph{Optimality} of improvements of inequalities is a delicate matter which deserves further studies, if one wishes to relax some of our assumptions. However, let us mention as a final comment that one of the advantages of the expansion of a square is that, in our framework, optimality cases are easy to identify and it is not the less remarkable aspect of our results that the computation of optimal constants is then straightforward in most of the cases.

\bigskip\noindent{\it Acknowledgments.\/}{ \small The authors acknowledge support by the ANR-08-BLAN-0333-01 project CBDif-Fr.}

\par\noindent{\scriptsize\copyright\,2011 by the authors. This paper may be reproduced, in its entirety, for non-commercial purposes.}


\end{document}